\newcommand{\R}{\mathbb{R}}
\newcommand{\fg}{\mathfrak g}
\newcommand{\fa}{\mathfrak a}
\newcommand{\fh}{\mathfrak h}
\newcommand{\fk}{\mathfrak k}
\newcommand{\fs}{\mathfrak {so}}
\newcommand{\fn}{\mathfrak n}
\newcommand{\Iso}{\mathrm{Iso}}
\newtheorem{definition}{Definition}[section]
\newtheorem{lemma}[definition]{Lemma}
\newtheorem{remark}[definition]{Remark}
\newtheorem{theorem}[definition]{Theorem}
\newtheorem{rem}[definition]{Remark}
\newtheorem{pro}[definition]{Proposition}
\def\co {cohomogeneity one}
\begin{document}
	
	\title[A classification of cohomogeneity one actions on the Minkowski space $\mathbb{R}^{3,1}$]
	{A classification of cohomogeneity one actions on the Minkowski space $\mathbb{R}^{3,1}$}
	
	\author{P. Ahmadi}	
	\author{S. Safari}
	\author{M. Hassani}

	\thanks{}
	
	\keywords{Cohomogeneity one, Isometric action, Minkowski Space}
	
	\subjclass[2010]{57S25, 53C30}
	
	\date{\today}
	\address{
		P. Ahmadi\\
		Departmental of mathematics\\
		University of Zanjan\\
		University blvd.\\
		Zanjan\\
		Iran}
	\email{p.ahmadi@znu.ac.ir}
	
	\address{
		S. Safari\\
		Departmental of mathematics\\
		University of Zanjan\\
		University blvd.\\
		Zanjan\\
		Iran}
	\email{salim.safari@znu.ac.ir}
	
	\address{
			M. Hassani\\
			Departmental of mathematics\\
			University of Zanjan\\
			University blvd.\\
			Zanjan\\
			Iran}
		\email{masoud.hasani@znu.ac.ir}

	\begin{abstract}
		The aim of this paper is to classify cohomogeneity one isometric actions on the $4$-dimensional Minkowski space $\R^{3,1}$, up to orbit equivalence. Representations, up to conjugacy, of the acting groups in $O(3,1)\ltimes \R^{3,1}$ are given in both cases, proper and non-proper actions. When the action is proper, the orbits and the orbit spaces are determined.
	\end{abstract}
	
	\maketitle
	\tableofcontents
	\medskip
	
	\thispagestyle{empty}
	

\section{Introduction}
An action of a Lie group $G$ on a connected manifold $M$ is called of cohomogeneity one, if the minimum codimension of the induced $G$-orbits in $M$ is one. The concept of a cohomogeneity one action on a manifold M was introduced by P.S. Mostert in his 1956 paper \cite{Mos}, wherein it is assumed that the acting group is compact. With this assumption, he determined the orbit space up to homeomorphism. More precisely, he proved that by the cohomogeneity one action of a compact Lie group $G$ on a manifold $M$ the orbit space $M/G$ is homeomorphic to one of the spaces $\R$, $\mathbb{S}^1$, $[0,1]$, or $[0,1)$. For the general case, in \cite{Br}, B. Bergery showed that
if $M$ is a Riemannian manifold and $G$, a closed Lie subgroup of $\Iso(M)$, acts isometrically and with cohomogeneity one on $M$, then the orbit space is one of the mentioned spaces. The key hypothesis was the closeness of the acting group in $\Iso(M)$. 

An action of a Lie group $G$ on a manifold $M$ is said to be {\it proper} if the mapping $\varphi:G\times M\rightarrow M\times M , \ (g,x)\mapsto(g.x,x)$ is
proper. A result by D. Alekseevsky in \cite{Alee} says that, the action of $G$ on $M$ is proper if and only if there is a complete $G$-invariant Riemannian metric on $M$ such that $G$ is closed in $\Iso(M)$. This theorem provides a link between proper actions and Riemannian $G$-manifolds.

Cohomogeneity one Riemannian manifolds have been studied by many mathematicians (see, e.g., \cite{AA, Br, GWZ, GZ1, GZ2, MK, Mos, PT, PS, S, V1, V2}). The common hypothesis in the theory is that the acting group is closed in the full isometry group of the Riemannian manifold and the action is isometric. When the metric on $M$ is indefinite, this assumption in general does not imply that the action is proper, so the study becomes much more complicated. 

The most natural way to study a cohomogeneity one semi-Riemannian manifold $M$ is to determine the acting group in $\Iso(M)$, up to conjugacy, since the actions of two conjugate subgroups in $\Iso(M)$ induce almost the same orbits in $M$. This has been done for space forms in some special cases (see \cite{A1, A2, AhA, Ahmadi}). This way is the one that we pursue in this paper.

Here, we assume that $M$ is the simplest example of a relativistic spacetime, the $4$-dimensional Minkowski space $\R^{3,1}$, and $G$ is a connected Lie subgroup of $\Iso(\R^{3,1})$ which acts on $\R^{3,1}$ isometrically and with cohomogeneity one. We give explicit representations of such groups in $\Iso(\R^{3,1})=O(3,1)\ltimes \R^{3,1}$, up to conjugacy. Then we determine those acting properly and nonproperly. When the action is proper, we specify the induced orbits and the orbit spaces.

\section{Preliminaries}\label{Lie algebra}

Let $G$ be a Lie group which acts on a connected smooth manifold $M$. The action is called of {\it cohomogeneity one}, if the minimum codimension among the induced $G$-orbits in $M$ is one.  For each point $x$ in $M$, $G(x)$ denotes the orbit of $x$, and $G_x$ is the stabilizer in $G$ of $x$. The action is said to be {\it proper} if the mapping
$\varphi:G\times M\rightarrow M\times M , \ (g,x)\mapsto(g.x,x)$ is
proper. Equivalently, for any sequences $x_n$ in $M$ and $g_n$ in $G$, $g_nx_n\rightarrow y$ and $x_n\rightarrow x$ imply that $g_n$ has a convergent subsequence. The $G$-action on $M$ is {\it nonproper} if it is not proper. Equivalently, there are sequences $g_n$ in $G$ and $x_n$ in $M$ such that $x_n$ and $g_nx_n$ converge in $M$ and $g_n\rightarrow \infty$, i.e. $g_n$ leaves compact subsets. For instance, if $G$ is compact, the action is obviously proper. The orbit
space $M/G$ of a proper action of $G$ on $M$ is Hausdorff, the
orbits are closed submanifolds, and the stabilizers are
compact (see \cite{Adams}).

The Minkowski $4$-space $\R^{3,1}$ is the $4$-dimensional real vector space $\R^{4}$ with the line element $ds^{2}=dx_{1}^{2}+dx_{2}^{2}+dx_{3}^{2}-dx_{4}^{2}$. The orthogonal group $O(3,1)$ of the scalar product obtained of this line element is known as the Lorentz group and its elements are called Lorentz transformations of $\R^{3,1}$. The isometry group $\Iso (\R^{3,1})$ is the semidirect product $O(3,1)\ltimes_\tau \R^{3,1}$ with $\tau: O(3,1)\times \R^{3,1}\rightarrow \R^{3,1}$, $(V,v)\mapsto V(v)=Vv$. The multiplication and inversion on $\Iso (\R^{3,1})$ is given by $(V,v)(U,u)=(VU,v+V(u))$ and $(V,v)^{-1}=(V^{-1},-V^{-1}(v))$ and the action of $\Iso(\R^{3,1})$ on $\R^{3,1}$ is given by $\Iso(\R^{3,1})\times \R^{3,1}\rightarrow \R^{3,1}$, $((V,v),p)\mapsto V(p)+v$. The isometry group $\Iso(\R^{3,1})$ has four connected components. The identity component of $\Iso(\R^{3,1})$ is denoted by $\Iso_{\circ}(\R^{3,1})=SO_{\circ}(3,1)\ltimes \R^{3,1}$.

The Lie algebra of $\Iso(\R^{3,1})$ is the semidirect sum $\mathfrak{so}(3,1)\oplus_{\varphi}\R^{3,1}$, where $\varphi:  \mathfrak{so}(3,1)\oplus \R^{3,1}\rightarrow \R^{3,1}$, is defined by $\varphi(X+x)=X(x)=Xx$. The Lie bracket on $\mathfrak{so}(3,1)\oplus_\varphi \R^{3,1}$ is given by 
\begin{equation}\label{braketrule}[X+x,Y+y]=(XY-YX)+(Xy-Yx),\end{equation}
which yields the adjoint representation as follows 
$$Ad(U,u)(X+x)=UXU^{-1}+(Ux-UXU^{-1}u).$$
The Lie algebra of $SO_\circ (3,1)$ is given by
\begin{align*}
\mathfrak{so}(3,1) = \left\lbrace  \left(\begin{array}{cc}C&c \\
c^{t} & 0 \end{array}\right): C\in \mathfrak{so}(3), c\in \R^3 \right\}.
\end{align*} 
Considering the Cartan involution $\theta (X)=-X^t$ on $\mathfrak{so}(3,1)$ one gets the Cartan decomposition $\mathfrak{so}(3,1)=\mathfrak{k}\oplus \mathfrak{p}$ where,
\begin{align*}
\fk = \left\lbrace  \left(\begin{array}{cc}C&0 \\
0 & 0 \end{array}\right)\in \fs (3,1)\right\}\cong \fs (3),\quad {\rm and} \quad
\mathfrak{p}=\left\lbrace  \left(\begin{array}{cc}0&c \\
c^{t}& 0 \end{array}\right)\in \fs (3,1) \right\}\cong \R^{3}.
\end{align*}
The subspace 
$$\fa =\R\left(\begin{array}{cc}0&e_4\\(e_4)^t&0\end{array}\right)$$
is a maximal abelian subspace of $\mathfrak{p}$, where $(e_1,e_2,e_3,e_4)$ is the standard orthonormal basis of $\R^{3,1}$.
Let $$\fn=\left\{\left(\begin{array}{ccc}0&c&c\\-c^t&0&0\\c^t&0&0\end{array}\right):\ c\in \R^2\right\}.$$
Then $\fs (3,1)=\fk\oplus\fa\oplus\fn$ is an Iwasawa decomposition of $\fs (3,1)$. We denote by $K=SO(3)$, $A$ and $N$
the connected closed subgroups of $SO_\circ(3,1)$
with Lie algebras $\fk$, $\fa$ and $\fn$, respectively. We use the corresponding Iwasawa decomposition $SO_\circ(3,1)=KAN$ throughout the paper.

\section{Groups acting with cohomogeneity one on $\R^{3,1}$}
In this section we classify cohomogeneity one actions on the four dimensional Minkowski space $\R^{3,1}$ up to orbit-equivalent. We first fix some notations. We define the light-like line $ \ell\subset \R^{3,1}$ by $\ell=\R(e_{3}- e_{4})$, the degenerate plane $\mathbb{W}^2\subset \R^{3,1}$ by $\mathbb{W}^2=\R e_2 \oplus \ell$ and the degenerate hyperplane $\mathbb{W}^3\subset \R^{3,1}$ by $\mathbb{W}^3=\R e_1 \oplus \mathbb{W}^2$. As described in Section \eqref{Lie algebra}, we denote by $SO_{\circ}(3,1)=KAN$ the Iwasawa decomposition of $SO_{\circ}(3,1)$ and by
$\mathfrak{so}(3,1)= \fk\oplus \fa\oplus \fn$ the Iwasawa decomposition of the Lie algebra $\mathfrak{so}(3,1)$. We define
\begin{align*}
&Y^{1}_{\fk}= E_{12}-E_{21},\quad  Y^{2}_{\fk}= E_{13}-E_{31},\quad Y^{3}_{\fk}= E_{23}-E_{32}\in \fk,\\
&Y_{\fa}=E_{34}+E_{43}\in \fa,\\
&Y^{1}_{\fn}=E_{13}+E_{14}-E_{31}+E_{41}, \quad
Y^{2}_{\fn}=E_{23}+E_{24}-E_{32}+E_{42}\in \fn. 
\end{align*}
where $ E_{ij} $ is the $4\times 4$ matrix whose $ (i,j)$-entry is $ 1 $ and other entires are all $ 0$. Then one gets that
\begin{align}\label{brackets31}
&[Y_{\fk}^{1},Y_{\fk}^{2}]=-Y^{3}_{\fk},\quad 	[Y_{\fk}^{1},Y_{\fk}^{3}]=Y^{2}_{\fk},\nonumber\\
&[Y_{\fk}^{2},Y_{\fk}^{3}]=-Y^{1}_{\fk},\nonumber\\
&[Y_{\fk}^{1},Y_{\fn}^{1}]=-Y^{2}_{\fn},\quad 	[Y_{\fk}^{1},Y_{\fn}^{2}]=Y^{1}_{\fn},\nonumber\\
&[Y_{\fk}^{2},Y_{\fn}^{1}]=-Y_{\fa},\quad 	[Y_{\fk}^{2},Y_{\fn}^{2}]=-Y^{1}_{\fk},\\
&[Y_{\fk}^{3},Y_{\fn}^{1}]=Y^{1}_{\fk},\quad 	[Y_{\fk}^{3},Y_{\fn}^{2}]=-Y_{\fa},\nonumber \\
&[Y_{\fk}^{1},Y_{\fa}]=0,\quad 	[Y_{\fk}^{2},Y_{\fa}]=Y^{1}_{\fn}-Y^{2}_{\fk},\quad\nonumber\\
&[Y_{\fk}^{3},Y_{\fa}]=Y^{2}_{\fn}-Y^{3}_{\fk}\nonumber\\
&[Y_{\fa},Y_{\fn}^{1}]=-Y^{1}_{\fn},\quad [Y_{\fa},Y_{\fn}^{2}]=-Y^{2}_{\fn},\nonumber\\
&[Y_{\fn}^{1},Y_{\fn}^{2}]=0.\nonumber
\end{align}
If $ Y_{\mathfrak{k}}^{1}+u$, $Y_{\mathfrak{k}}^{2}+v$, $Y_{\mathfrak{k}}^{3}+w$, $Y_{\mathfrak{a}}+x$, $Y_{\fn}^{1}+y$ and $Y_{\fn}^{2}+z$ is a basis for a subalgebra $\fh\subseteq \fg$, where $u, v, w, x, y, z\in\R^{1,3}$, then the following vectors should belong to $\fh$.
\begin{align}\label{Bracketv31}
&[Y_{\fk}^{1}+u,Y_{\fk}^{2}+v]=-Y^{3}_{\fk}+(Y_{\fk}^{1}v-Y_{\fk}^{2}u),\quad 	[Y_{\fk}^{1}+u,Y_{\fk}^{3}+w]=Y^{2}_{\fk}+(Y_{\fk}^{1}w-Y_{\fk}^{3}u),\nonumber\\
&[Y_{\fk}^{2}+v,Y_{\fk}^{3}+w]=-Y^{1}_{\fk}+(Y_{\fk}^{2}w-Y_{\fk}^{3}v),\nonumber\\
&[Y_{\fk}^{1}+u,Y_{\fn}^{1}+y]=-Y^{2}_{\fn}+(Y_{\fk}^{1}y-Y_{\fn}^{1}u),\quad 	[Y_{\fk}^{1}+u,Y_{\fn}^{2}+z]=Y^{1}_{\fn}+(Y_{\fk}^{1}z-Y_{\fn}^{2}u),\nonumber\\
&[Y_{\fk}^{2}+v,Y_{\fn}^{1}+y]=-Y_{\fa}+(Y_{\fk}^{2}y-Y_{\fn}^{1}v),\quad 	[Y_{\fk}^{2}+v,Y_{\fn}^{2}+z]=-Y^{1}_{\fk}+(Y_{\fk}^{2}z-Y_{\fn}^{2}v),\\
&[Y_{\fk}^{3}+w,Y_{\fn}^{1}+y]=Y^{1}_{\fk}+(Y_{\fk}^{3}y-Y_{\fn}^{1}w),\quad 	[Y_{\fk}^{3}+w,Y_{\fn}^{2}+z]=-Y_{\fa}+(Y_{\fk}^{3}z-Y_{\fn}^{2}w),\nonumber \\
&[Y_{\fk}^{1}+u,Y_{\fa}+x]=(Y_{\fk}^{1}x-Y_{\fa}u),\qquad\quad 	[Y_{\fk}^{2}+v,Y_{\fa}+x]=Y^{1}_{\fn}-Y^{2}_{\fk}+(Y_{\fk}^{2}x-Y_{\fa}v),\quad\nonumber\\
&[Y_{\fk}^{3}+w,Y_{\fa}+x]=Y^{2}_{\fn}-Y^{3}_{\fk}+(Y_{\fk}^{3}x-Y_{\fa}w)\nonumber\\
&[Y_{\fa}+x,Y_{\fn}^{1}+y]=-Y^{1}_{\fn}+(Y_{\fa}^{1}y-Y_{\fn}^{1}x),\quad [Y_{\fa}+x,Y_{\fn}^{2}+z]=-Y^{2}_{\fn}+(Y_{\fa}z-Y_{\fn}^{2}x),\nonumber\\
&[Y_{\fn}^{1}+y,Y_{\fn}^{2}+z]=(Y_{\fn}^{1}z-Y_{\fn}^{2}y).\nonumber
\end{align}
These imply that
\begin{align}\label{vectors31}
&Y_{\fk}^{1}v-Y_{\fk}^{2}u=-w,\quad Y_{\fk}^{1}w-Y_{\fk}^{3}u=v,\quad
Y_{\fk}^{2}w-Y_{\fk}^{3}v=-u,\nonumber\\
&Y_{\fk}^{1}y-Y_{\fn}^{1}u=-z,\quad 	Y_{\fk}^{1}z-Y_{\fn}^{2}u=y,\nonumber\\
&Y_{\fk}^{2}y-Y_{\fn}^{1}v=-x,\quad 	Y_{\fk}^{2}z-Y_{\fn}^{2}v=-u,\\
&Y_{\fk}^{3}y-Y_{\fn}^{1}w=u,\quad \	Y_{\fk}^{3}z-Y_{\fn}^{2}w=-x,\nonumber\\
&Y_{\fk}^{1}x-Y_{\fa}u=0,\quad \quad	
Y_{\fk}^{2}x-Y_{\fa}v=y-v,\quad 	
Y_{\fk}^{3}x-Y_{\fa}w=z-w,\nonumber\\
&Y_{\fa}^{1}y-Y_{\fn}^{1}x=-y,\quad 	
Y_{\fa}z-Y_{\fn}^{2}x=-z,\nonumber\\
&Y_{\fn}^{1}z-Y_{\fn}^{2}y=0.\nonumber	
\end{align}
For arbitrary $p=(p_1,p_2,p_3,p_4)^{t}\in \R^{3,1}$ we have
\begin{align*}
&Y^{1}_{\fk}p = (p_{2}, -p_{1}, 0, 0)^{t},\quad\quad\quad\ \ Y^{2}_{\fk}p = (p_{3}, 0, -p_{1}, 0)^{t},\quad Y^{3}_{\fk}p = (0, p_{3}, -p_{2}, 0)^{t}\\
&Y^{1}_{\fn}p = (p_{3}+p_{4}, 0, -p_{1}, p_{1})^{t},\quad Y^{2}_{\fn}p = (0, p_{3}+p_{4}, -p_{2}, p_{2})^{t},\\
&Y_{\fa}p = (0, 0, p_{4}, p_{3})^{t}.
\end{align*}
Hence the relations in \eqref{vectors31} imply that
\begin{align*}
&\left(\begin{array}{c} v_{2}-u_{3}\\-v_{1}\\u_{1}\\0
\end{array}\right)=-\left(\begin{array}{c} w_{1}\\w_{2}\\w_{3}\\w_{4}
\end{array}\right),\quad 
\left(\begin{array}{c} w_{2}\\-w_{1}-u_{3}\\u_{2}\\0
\end{array}\right)=\left(\begin{array}{c} v_{1}\\v_{2}\\v_{3}\\v_{4}
\end{array}\right),\quad
\left(\begin{array}{c} w_{3}\\-v_{3}\\-w_{1}+v_{2}\\0
\end{array}\right)=-\left(\begin{array}{c} u_{1}\\u_{2}\\u_{3}\\u_{4}
\end{array}\right),
\end{align*}
\begin{align*}
&\left(\begin{array}{c} y_{2}-u_{3}-u_{4}\\-y_{1}\\u_{1}\\-u_{1}
\end{array}\right)=-\left(\begin{array}{c} z_{1}\\z_{2}\\z_{3}\\z_{4}
\end{array}\right),\quad 
\left(\begin{array}{c} z_{2}\\-z_{1}-u_{3}-u_{4}\\u_{2}\\-u_{2}
\end{array}\right)=\left(\begin{array}{c} y_{1}\\y_{2}\\y_{3}\\y_{4}
\end{array}\right),\\
&\left(\begin{array}{c} y_{3}-v_{3}-v_{4}\\0\\-y_{1}+v_{1}\\-v_{1}
\end{array}\right)=-\left(\begin{array}{c} x_{1}\\x_{2}\\x_{3}\\x_{4}
\end{array}\right),\quad
\left(\begin{array}{c} z_{3}\\-v_{3}-v_{4}\\-z_{1}+v_{2}\\-v_{2}
\end{array}\right)=-\left(\begin{array}{c} u_{1}\\u_{2}\\u_{3}\\u_{4}
\end{array}\right),\\
&\left(\begin{array}{c} -w_{3}-w_{4}\\y_{3}\\-y_{2}+w_{1}\\-w_{1}
\end{array}\right)=\left(\begin{array}{c} u_{1}\\u_{2}\\u_{3}\\u_{4}
\end{array}\right),\quad
\left(\begin{array}{c} 0\\z_{3}-w_{3}-w_{4}\\-z_{2}+w_{2}\\-w_{2}
\end{array}\right)=-\left(\begin{array}{c} x_{1}\\x_{2}\\x_{3}\\x_{4}
\end{array}\right),
\end{align*}
\begin{align*}
&\left(\begin{array}{c} x_{2}\\-x_{1}\\-u_{4}\\-u_{3}
\end{array}\right)=\left(\begin{array}{c} 0\\0\\0\\0
\end{array}\right),\quad 
\left(\begin{array}{c} x_{3}\\0\\-x_{1}-v_{4}\\-v_{3}
\end{array}\right)=\left(\begin{array}{c} y_{1}-v_{1}\\y_{2}-v_{2}\\y_{3}-v_{3}\\y_{4}-v_{4}
\end{array}\right),\quad
\left(\begin{array}{c} 0\\x_{3}\\-x_{2}-w_{4}\\-w_{3}
\end{array}\right)=\left(\begin{array}{c} z_{1}-w_{1}\\z_{2}-w_{2}\\z_{3}-w_{3}\\z_{4}-w_{4}
\end{array}\right),\\\\
&\left(\begin{array}{c} -x_{3}-x_{4}\\0\\y_{4}+x_{1}\\y_{3}-x_{1}
\end{array}\right)=-\left(\begin{array}{c} y_{1}\\y_{2}\\y_{3}\\y_{4}
\end{array}\right),\quad 
\left(\begin{array}{c} 0\\-x_{3}-x_{4}\\z_{4}+x_{2}\\z_{3}-x_{2}
\end{array}\right)=-\left(\begin{array}{c} z_{1}\\z_{2}\\z_{3}\\z_{4}
\end{array}\right),\quad
\left(\begin{array}{c} z_{3}+z_{4}\\-y_{3}-y_{4}\\-z_{1}+y_{2}\\z_{1}-y_{2}
\end{array}\right)=\left(\begin{array}{c} 0\\0\\0\\0
\end{array}\right).
\end{align*}
Therefore
\begin{align}\label{vectors4}
&	u=(u_{1},u_{2},0,0)^{t},\quad v=(v_{1},0,u_{2},0)^{t},\qquad\qquad\ w=(0, v_{1},-u_{1}, 0)^{t}\nonumber\\ 
&x=(0,0,x_{3}, v_{1})^{t},\quad y=(x_{3}+v_{1},0,u_{2},-u_{2})^{t},\quad z=(0, x_{3}+v_{1},-u_{1},u_{1})^{t},
\end{align}
which are key relations in determining the list of groups acting with cohomogeneity one on $\R^{3,1}$ (see the proof of Theorem \ref{main}). The first step in classifying cohomogeneity one actions on $\R^{3,1}$ is to determine Lie subgroups of $SO_\circ(3,1)$ of each dimension, up to conjugacy. First we recall the following theorem from \cite{Scala}.

\begin{theorem}\label{scala-t}
	 Let $H$ be a connected Lie subgroup of
	$SO(n,1)$ and assume that the action of $H$ on the Minkowski space $\R^{n,1}$ is	irreducible. Then $H = SO_\circ(n,1)$.
\end{theorem}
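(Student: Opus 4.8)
The plan is to pass to Lie algebras and translate irreducibility into the geometry of hyperbolic space. Writing $\fh\subseteq\fs(n,1)$ for the Lie algebra of $H$, connectedness gives that $H$ acts irreducibly on $\R^{n,1}$ if and only if $\fh$ does, i.e.\ $\R^{n,1}$ has no nonzero proper $\fh$-invariant subspace. First I would record the meaning of the smallest invariant subspaces: an invariant timelike or spacelike line $\R v$ produces the orthogonal splitting $\R^{n,1}=\R v\oplus v^{\perp}$, while an invariant lightlike line $\ell$ produces the invariant flag $\ell\subset\ell^{\perp}$; each contradicts irreducibility. Hence $H$ fixes no point of hyperbolic space $\mathbb{H}^{n}$ (no invariant timelike line), no point of its ideal boundary $\partial\mathbb{H}^{n}$ (no invariant lightlike line), and preserves no proper nondegenerate subspace, that is, no proper totally geodesic $\mathbb{H}^{k}\subset\mathbb{H}^{n}$.

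The decisive structural step is reductivity. Because $H$ fixes no point of $\partial\mathbb{H}^{n}$ it is contained in no proper parabolic subgroup of $SO_\circ(n,1)$, since in this real-rank-one group the proper parabolics are exactly the stabilizers of lightlike lines. By the Borel--Tits/Mostow description of subgroups lying in no proper parabolic, $\fh$ is then reductive and the $H$-action on $\R^{n,1}$ is completely reducible. Next I would eliminate the center: by Schur the commutant of the irreducible real representation is $\R$, $\C$, or $\mathbb{H}$, and an invariant symmetric form has index divisible by $2$, respectively $4$, after realification in the $\C$- and $\mathbb{H}$-cases, which is excluded by index one; so the commutant is $\R$, the center acts by skew scalars and hence trivially, and $\fh$ is semisimple. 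Finally $\fh$ cannot be compact: a compact group preserves a positive-definite $g$, and the $g$-self-adjoint operator $g^{-1}b$ attached to the Lorentz form $b$ has real eigenvalues with invariant eigenspaces, so irreducibility forces $b$ proportional to $g$, hence definite, impossible for index one. Thus $H$ is a noncompact semisimple subgroup.

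The core of the argument is then a restricted-weight computation. Choosing a maximal $\R$-split torus $A_H\subset H$ with Lie algebra $\fa_H$, its action on $\R^{n,1}$ is simultaneously diagonalizable, giving $\R^{n,1}=V_0\oplus\bigoplus_{\mu>0}(V_\mu\oplus V_{-\mu})$. Since $A_H\subset SO(n,1)$ the form pairs $V_\mu$ with $V_{-\mu}$ and is nondegenerate on $V_0$, and each hyperbolic block $V_\mu\oplus V_{-\mu}$ contributes exactly $\dim V_\mu$ to the negative index. As the total index is $1$ and $H$ is noncompact, some $V_\mu\neq 0$, so there is exactly one positive restricted weight $\mu$ with $\dim V_\mu=\dim V_{-\mu}=1$ and $V_0$ spacelike of dimension $n-1$. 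Faithfulness of the representation kills the kernel of $\mu$, forcing $\dim\fa_H=1$; hence $H$ has real rank one and exhibits precisely the weight pattern $\{-\mu,0,\mu\}$ of the standard representation of $\fs(n,1)$.

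To finish I would match this data against the list of real-rank-one simple Lie algebras $\fs(m,1)$, $\mathfrak{su}(m,1)$, $\mathfrak{sp}(m,1)$, $\mathfrak{f}_{4(-20)}$: among these only $\fs(n,1)$ carries an irreducible real representation with an invariant symmetric form of index one, the remaining Hermitian-type cases giving index at least two after realification, and that representation is the standard one. Therefore $\fh\cong\fs(n,1)$ and the embedding $\fh\hookrightarrow\fs(n,1)$ is, up to equivalence, the identity, so $H=SO_\circ(n,1)$. I expect this last identification to be the main obstacle: excluding \emph{all} exotic irreducible embeddings of noncompact semisimple groups into $SO(n,1)$ is exactly where one must combine the rank-one classification with the index-one rigidity, and making the reductivity step of the second paragraph fully rigorous likewise rests on nontrivial structure theory rather than the elementary linear algebra used in the reductions.
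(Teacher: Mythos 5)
First, a point of comparison: the paper does not prove this statement at all --- it is quoted from Di Scala--Olmos \cite{Scala}, whose argument is geometric, resting on their structure theory of homogeneous submanifolds of the hyperbolic space $\mathbb{H}^{n}$ and the trichotomy fixed point / fixed point at infinity / invariant totally geodesic submanifold. Your proposal takes an entirely algebraic route through reductivity, Schur's lemma and restricted weights. That route is legitimate in outline, and several of its reductions are correct: the exclusion of invariant lines and of proper nondegenerate invariant subspaces, the elimination of a compact $\fh$, and the count showing that the negative index equals $\sum_{\mu>0}\dim V_{\mu}$, which forces a single positive restricted weight of multiplicity one and real rank one. (Incidentally, you do not need Borel--Tits for reductivity: an irreducible linear Lie algebra is automatically reductive with center acting through the commutant, which is the elementary standard fact in play here.)

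As written, though, the argument has genuine gaps. The decisive one is the step you yourself flag: the assertion that among $\fs(m,1)$, $\mathfrak{su}(m,1)$, $\mathfrak{sp}(m,1)$, $\mathfrak{f}_{4(-20)}$ only $\fs(n,1)$, and only in its standard representation, admits an irreducible real representation with an invariant symmetric form of index one is essentially the content of the theorem, and it is asserted rather than proved. Even for $\fs(2,1)\cong\mathfrak{sl}(2,\R)$ one must actually run the weight computation to discard the $(2k+1)$-dimensional irreducibles (their invariant forms have signature $(k+1,k)$, so index $k$), and the $BC_{1}$ cases and $\mathfrak{f}_{4(-20)}$ each need their own verification. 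Two smaller holes: (i) your analysis yields that $\fh$ is semisimple of real rank one, not that it is simple, so a compact semisimple ideal must still be excluded (it acts trivially on the one-dimensional spaces $V_{\pm\mu}$, hence on a nonzero invariant subspace, hence on all of $V$, which does the job but has to be said); (ii) the claim that a complex or quaternionic commutant forces even index requires the case distinction on the involution induced by $b$ on the commutant --- the $J$-invariant case gives even index, while the $J$-anti-invariant case gives index $\tfrac{1}{2}\dim V$ --- neither of which is the one-line realification you invoke. None of this is unfixable, but until the final classification step is carried out the proposal is a program rather than a proof.
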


\begin{lemma}\label{subgroups}
	There is no five dimensional Lie subgroup of $SO_\circ(3,1)$. The only four dimensional connected Lie subgroup of $SO_\circ(3,1)$ is conjugate to $K_1AN$. Any three dimensional connected Lie subgroup of $SO_\circ(3,1)$ is conjugate to one of the following Lie subgroups. 
	$$SO_\circ(2,1),\quad  SO(3), \quad \exp(\R(\lambda Y_{\fk}^1 +\mu Y_\fa))N,$$
	where $\lambda$ and $\mu$ are fixed real numbers, and at least one of them is nonzero.
\end{lemma}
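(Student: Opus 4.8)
The plan is to invoke Theorem \ref{scala-t} to reduce everything to subgroups preserving a proper subspace, and then to bound the dimension of such a subgroup by that of the corresponding stabilizer in $SO_\circ(3,1)$. First I would record, for each type of proper nonzero subspace $V\subset\R^{3,1}$, the dimension of its stabilizer $\mathrm{Stab}(V)\subseteq SO_\circ(3,1)$; since an isometry preserving $V$ also preserves $V^\perp$, it is enough to treat lines and planes. A timelike line has stabilizer $SO(3)$ and a spacelike line has stabilizer $SO_\circ(2,1)$, both of dimension $3$; a nondegenerate plane has a stabilizer of dimension $2$; the degenerate plane $\mathbb{W}^2$ has stabilizer $AN$, of dimension $3$; and the null line $\ell$ (equivalently the degenerate hyperplane $\mathbb{W}^3=\ell^\perp$) has as stabilizer the minimal parabolic $P=MAN=K_1AN$ of dimension $4$, where $M=K_1=\exp(\R Y^1_\fk)$ is the centralizer of $A$ in $K$ (note $[Y^1_\fk,Y_\fa]=0$ in \eqref{brackets31}). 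The crucial point is that $4$ is the largest stabilizer dimension that occurs.

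Now let $H\subseteq SO_\circ(3,1)$ be connected and proper. By the contrapositive of Theorem \ref{scala-t} it cannot act irreducibly, so it preserves some proper nonzero $V$; hence $H\subseteq\mathrm{Stab}(V)$ and $\dim H\le 4$. In particular no $5$-dimensional subgroup exists. If $\dim H=4$, then $\dim\mathrm{Stab}(V)=4$ forces $\mathrm{Stab}(V)=P$, and since $H$ is a connected $4$-dimensional subgroup of the $4$-dimensional $P$ we get $H=P=K_1AN$; transitivity of $SO_\circ(3,1)$ on null lines (the celestial sphere) makes all such $P$ conjugate, which yields uniqueness up to conjugacy.

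For $\dim H=3$ the preserved subspace must satisfy $\dim\mathrm{Stab}(V)\ge 3$, leaving only three cases. If $V$ is a timelike line then $H=SO(3)$; if $V$ is a spacelike line then $H=SO_\circ(2,1)$; and if $V$ is a null line or a degenerate plane then $H$ sits inside the parabolic, so $\fh$ is a codimension-one subalgebra of $\mathfrak q=\R Y^1_\fk\oplus\fa\oplus\fn$. To classify these I would read off from \eqref{brackets31} that the commuting generators $Y^1_\fk,Y_\fa$ act on the abelian plane $\fn=\langle Y^1_\fn,Y^2_\fn\rangle$ as a $90^\circ$ rotation and as $-\mathrm{id}$, respectively. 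I then split according to whether $\fn\subseteq\fh$: if $\fn\subseteq\fh$ then $\fh=\R(\lambda Y^1_\fk+\mu Y_\fa)\oplus\fn$ with $(\lambda,\mu)\neq(0,0)$, which is exactly the listed family; if $\fn\not\subseteq\fh$ then a dimension count gives $\dim(\fh\cap\fn)=1$, say $\fh\cap\fn=\R n_0$ with $n_0\neq0$, and then $[Y^1_\fk,n_0]$ lies in $\fh\cap\fn$ but, being the image of $n_0$ under a fixed-point-free rotation of $\fn$, is linearly independent from $n_0$ — a contradiction. Thus the second case is vacuous and the three-dimensional list is complete.

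The hard part is not any single estimate but the bookkeeping: verifying the stabilizer-dimension table over all subspace types and supplying the transitivity statements that turn equalities of stabilizers into conjugacy of subgroups (transitivity on null lines, on timelike lines, and on spacelike lines). The only genuinely algebraic step, the three-dimensional case inside $\mathfrak q$, is made painless by the rotation action of $Y^1_\fk$ on $\fn$, so no lengthy computation with \eqref{brackets31} is required.
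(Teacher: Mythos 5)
Your proposal is correct and follows essentially the same route as the paper: invoke Theorem \ref{scala-t}, reduce a proper connected subgroup to the stabilizer of an invariant line or plane, and finish with the bracket relations \eqref{brackets31}. In fact you supply the one step the paper leaves implicit (the classification of the codimension-one subalgebras of $\fk_1\oplus\fa\oplus\fn$ via the fixed-point-free rotation action of $Y^1_{\fk}$ on $\fn$), and your identification of the stabilizer of the degenerate plane as the three-dimensional $AN$ --- rather than the paper's $AN_1$ --- is the correct one, though this discrepancy does not affect the final list.
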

\begin{proof} By Theorem \ref{scala-t}, the group $SO_\circ (3,1)$ is the only connected Lie subgroup of $SO_\circ(3,1)$ which acts irreducibly. Thanks to this theorem, every connected proper Lie subgroup of $SO_\circ (3,1)$  preserves a nontrivial linear subspace of $\R^{3,1}$. Every one dimensional space-like, time-like or light-like subspace of $\R^{3,1}$ is conjugate under $O(3,1)$ to $\R e_{1}$, $\R e_{4}$ or $\ell$, respectively. Let $H$ be a Lie subgroup of $SO_\circ(3,1)$  which acts on $\R^{3,1}$ isometrically and preserves a nontrivial linear subspace $\mathscr{L}$.
\begin{itemize}
	\item If $\mathscr{L}=\R e_{1}$, then $H\subseteq SO_\circ(2,1)$,
	\item If $\mathscr{L}=\R e_{4}$, then $H\subseteq SO(3)$,
	\item If $\mathscr{L}=\ell$, then $H \subseteq K_{1}AN$.
\end{itemize}
And any two dimensional space-like, Lorentzian or degenerate subspace of $\R^{3,1}$ is conjugate under $O(3,1)$ to $\R e_{1}\oplus \R e_{2}$, $\R e_{3}\oplus \R e_{4}$ or $\R e_{2}\oplus \ell$, respectively.
\begin{itemize}
	\item If $\mathscr{L}$ is either $\R e_{1}\oplus \R e_{2}$ or $\R e_{3}\oplus \R e_{4}$ , then $H\subseteq SO(2)\times SO_\circ(1,1)$,
	\item If $\mathscr{L}=\R e_{2}\oplus \ell$, then $H\subseteq AN_{1}$.
\end{itemize}
Now the lemma is an immediate consequence of the above classification and the relations in \eqref{brackets31}.
\end{proof}

\begin{remark}\label{Rem1}
By a  well-known fact, there exists a unique connected $2$-dimensional non-abelian Lie group, up to isomorphism (see \cite[p.212]{GOV}), namely $Aff_\circ(\R)$ which is the identity component of $Aff(\R)$, the group of affine transformations of the real line $\R$. The group $Aff(\R)$ consists of the translations and homotheties, therefore, it is isomorphic to the semi-direct product $\R^*\ltimes \R$. In fact, $Aff(\R)$ acts on $\R$ as following:
\[(r,v).x:=rx+v,\quad\quad(r,v)\in Aff(\R)\simeq \R^*\ltimes \R, \;x\in \R.\]
Hence, $Aff_\circ(\R)$ is isomorphic to $\R^*_+\ltimes \R$. This group acts on $\R$ transitively. Hence, the stabilizer of each point of $\R$ is conjugate to the stabilizer of the origin, which is $\R^*_+$.

 We show that $\R^*_+\ltimes \R$ has exactly two distinct $1$-parameter subgroups, up to conjugacy, namely $\R^*_+$ and $\R$. Let $\R\oplus_\theta \R$ be the Lie algebra of $\R^*_+\ltimes \R$, where $\theta:\R\rightarrow \fg \mathfrak{l}(\R)$ is given by  $\theta (\lambda)(x)=\lambda x$. Let $\omega=(\lambda,u)$ be an arbitrary nonzero element of $\R\oplus_\theta \R$.  Note that, if $\lambda=0$ (resp. $u=0$), then $\exp(\R \omega)=\R$ (resp. $\exp(\R \omega)=\R_+^*$). Now, suppose that $\lambda u\neq 0$. Without losing generality assume that $\lambda=1$. Then it can be easily seen that $G_{-u}=\{(e^t,(e^t-1)u):t\in \R\}=\exp(\R \omega)$. This shows that $\exp (\R\omega)$ is conjugate to $\R^*_+$, within $Aff_\circ (\R)$.
 
 Based on the above observation, any one dimensional Lie subgroup of $AN_1$ is conjugate to either $A$ or $N_1$, within $SO_\circ (3,1)$. The conjugation matrix, as an isometry, preserves $\R e_2$. A similar fact remains true for $AN_2$. Since $N$ is isomorphic to the additive Lie group $\R^2$, any one parameter subgroup of $N$ is conjugate to $N_1$. In this case, the conjugation matrix comes from $K_1$.   
 Therefore, using relations in \eqref{brackets31} shows that any two dimensional Lie subgroup of $SO_\circ (3,1)$ is conjugate to one of the subgroups $AN_1$, $K_1A$ or $N.$
\end{remark}
Let $L:O(3,1)\ltimes \R^{3,1}\rightarrow O(3,1)$ be the projection map on the first factor which is a Lie group homomorphism. Let $H$ be a Lie subgroup of $O(3,1)\ltimes \R^{3,1}$. Then $\ker L|_H$ is a normal subgroup of $H$, that is alled the {\it translation part} of $H$. The Lie subgroup $L(H)$ is called the {\it linear projection} of $H$ in $SO(3,1)$. Now we are ready to state our main theorem.

\begin{theorem}\label{main}
	Let $H$ be a connected subgroup of $\Iso(\R^{3,1})$ acting on $\R^{3,1}$ with cohomogeneity one. Then the action of $H$ is orbit-equivalent to the action of one of the following groups in Tables \ref{table1}-\ref{table4}.
	
	\begin{table}[h!]
		\centering
		\begin{tabular}{|c| c| c| } 
			\hline
			\multicolumn{3}{|c|}{\textbf{Subgroups with a hyperplane as the translation part}} \\
			\hline
			\textbf{Space-like}&\textbf{Lorentzian}&\textbf{degenerate} \\
			\hline
			$\R^{3}$ & $\R^{2,1}$ & $\mathbb{W}^{3}$\\
			\hline
		\end{tabular}
	\caption{Here $\mathbb{W}^{3}$ denotes the degenerate plane $\R e_1\oplus\R e_{2}\oplus \ell$.}
		\label{table1}
	\end{table}
\begin{table}[h!]
	\centering
	\begin{tabular}{|c| c|c| } 
		\hline
		\multicolumn{3}{|c|}{\textbf{Subgroups with a plane as the translation part}} \\
		\hline
		\textbf{space-like}&\textbf{Lorentzian}&\textbf{degenerate} \\
		\hline
		$SO_\circ(1,1)\times \R^{2}$ &$SO(2)\times \R^{1,1}$ &	$\exp(\R(Y_{\fa} + \lambda e_{1}))\ltimes \mathbb{W}^{2}$\\
			\hline
		&&$\exp(\R (Y^{1}_{\fn}+\lambda e_{4}))\ltimes \mathbb{W}^{2}$\\
			\hline
	\end{tabular}
	\caption{Here $\mathbb{W}^{2}$ denotes the degenerate plane $\R e_{2}\oplus \R (e_3-e_{4})\in \R^{3,1}$, and $\lambda\in \R$ is a fixed number.}
	\label{table2}
\end{table}
\begin{table}[h!]
	\centering
	\begin{tabular}{|c|c|c| } 
		\hline
		\multicolumn{3}{|c|}{\textbf{Subgroups with a line as the translation part}} \\
		\hline
		\textbf{space-like}&\textbf{time-like}&\textbf{light-like} \\
		\hline
	$SO_\circ(2,1)\times \R e_{1}$ & $SO(3)\times \R e_{4}$& $SO_{\circ}(2)\times SO_{\circ}(1,1)\ltimes \ell$\\
	\hline
		$AN_{2}\times \R e_{1}$ &  &$\exp(\R(Y_{\fa}+\lambda e_{2}))N_{1}\ltimes \ell$ \\
		\hline
					&  &$\exp(\R(Y^{1}_{\fn}+\lambda e_{2})+\R (Y^{2}_{\fn}+\lambda e_{1}+\mu e_{2}))\ltimes \ell$ \\
		\hline
					&  &$K_{1}N\ltimes \ell$ \\
		\hline
							&  &$\exp(\R (Y^{1}_{\fn}+\lambda e_{2})+ \R (Y^{2}_{\fn}+\lambda e_{1})+\R (aY^{1}_{\fk}+bY_{\fa}))\ltimes \ell$ \\
		\hline
	\end{tabular}
\caption{Here $\ell$ denotes the light-like line $\R (e_3-e_{4})\leq \R^{3,1}$, $\lambda,\mu,a,b\in \R$ are fixed numbers.}
	\label{table3}
\end{table}
	\begin{table}[h!]
		\centering
		\begin{tabular}{|c|c|c|c| } 
			\hline
			\multicolumn{4}{|c|}{\textbf{Subgroups with trivial translation part}} \\
			\hline
			$SO_{\circ}(3,1)=KAN$ & $ K_{1}AN$ &$\exp(\lambda Y^{1}_{\fk}+\mu Y_{\fa})N$ &$AN$\\
			\hline
		\end{tabular}
		\caption{Here $\lambda,\mu\in \R$ are fixed numbers.}
		\label{table4}
	\end{table}
\end{theorem}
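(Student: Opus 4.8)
The plan is to attach to $H$ two invariants and classify the pairs they form. Let $T=\ker(L|_H)$ be the translation part; as a connected subgroup of the abelian group $\R^{3,1}$ it is a linear subspace, of some dimension $k\le 3$ (it cannot be all of $\R^{3,1}$, or $H$ would act transitively). Let $\bar H=L(H)\subseteq SO_\circ(3,1)$ be the linear projection, so that $\dim H=k+\dim\bar H$. Since $T$ is normal in $H$ and conjugation by $(V,a)\in H$ carries the translation by $v$ to the translation by $Vv$, the group $\bar H$ must stabilize the subspace $T$. The tangent space to the orbit through $p$ is $T+\{Xp+\sigma(X):X\in\bar{\mathfrak h}\}$, where $\sigma$ is the section realizing $\mathfrak h$ as a graph over $\bar{\mathfrak h}$; cohomogeneity one means this space is generically $3$-dimensional. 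I would therefore organize the proof by the value of $k$, which produces exactly the four Tables \ref{table1}--\ref{table4}.

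I would dispose of the extremes $k=3$ and $k=0$ first. For $k=3$ the subspace $T$ is a hyperplane, determined up to $O(3,1)$-conjugacy by its induced metric, hence one of $\R^3$, $\R^{2,1}$, $\mathbb W^3$; as $\bar H$ stabilizes $T$ and the cosets $p+T$ are already $3$-dimensional, each connected orbit is a single such hyperplane, so the action is orbit-equivalent to translation by $T$ and we obtain Table \ref{table1}. For $k=0$ we have $H=\bar H\subseteq SO_\circ(3,1)$ with $\dim H\ge 3$, and Lemma \ref{subgroups} lists the candidates $SO_\circ(3,1)$, $K_1AN$, $SO_\circ(2,1)$, $SO(3)$ and $\exp(\R(\lambda Y_\fk^1+\mu Y_\fa))N$; I would keep exactly those whose generic orbit $\{Xp\}$ is $3$-dimensional, discarding $SO(3)$ and $SO_\circ(2,1)$ (which fix a line and have $2$-dimensional orbits), to reach Table \ref{table4} ($AN$ being the subcase $\lambda=0$ of the preceding family). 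The relations \eqref{vectors4}, which display every section of $\mathfrak{so}(3,1)$ into $\R^{3,1}$ as a coboundary $X\mapsto -Xp_0$, confirm that a full $6$-dimensional graph is conjugate by one translation to the standard $SO_\circ(3,1)$.

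The substance lies in the intermediate range $k\in\{1,2\}$. Here $T$ is classified up to conjugacy by its metric type (space-like, time-like or Lorentzian, or degenerate), and $\bar H$ must both stabilize $T$ and supply the missing orbit dimensions, so $\dim\bar H\ge 3-k$. I would enumerate the admissible $\bar H$ by combining the subspace-stabilizer constraint with Lemma \ref{subgroups} and Remark \ref{Rem1}: a space-like line forces $\bar H\subseteq SO_\circ(2,1)$, a time-like line forces $\bar H\subseteq SO(3)$, and the light-like line $\ell$ forces $\bar H\subseteq K_1AN$ (similarly for planes). For each such $\bar H$ I would lift a basis of $\bar{\mathfrak h}$ by unknown translation vectors and impose the subalgebra conditions — the computation of \eqref{Bracketv31}--\eqref{vectors4}, now restricted to the generators actually present — to solve for the admissible translation parts. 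The residual freedom is then removed by conjugating with translations (the coboundary directions) and with the stabilizer of $T$ in $\Iso_\circ(\R^{3,1})$; what genuinely survives appears as the fixed parameters $\lambda,\mu,a,b$ of Tables \ref{table2} and \ref{table3}, and a dimension count of $\{Xp+\sigma(X)\}$ verifies cohomogeneity one for each surviving group.

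The main obstacle, and where I expect the delicate work to concentrate, is the degenerate and light-like subcases. There $\bar H$ sits inside the solvable group $K_1AN$, so $H^1(\bar{\mathfrak h},\R^{3,1})$ need not vanish and the section $\sigma$ cannot be killed by a single translation as it was in the semisimple case \eqref{vectors4}; one must instead solve an inhomogeneous linear system for the translation parts, separate the removable coboundary directions from the genuine cohomology (the surviving moduli $\lambda,\mu,a,b$), and check that conjugation by the normalizer of $T$ does not collapse the remaining parameters. Establishing that this bookkeeping is exhaustive — that no admissible lift and no subgroup preserving a degenerate subspace has been overlooked — is the crux of proving completeness of Tables \ref{table2} and \ref{table3}.
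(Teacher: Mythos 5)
Your plan is essentially the paper's own proof: the paper likewise stratifies by $\dim(\fh\cap\R^{3,1})$ (its Cases I--IV matching your $k=0,1,2,3$ and Tables \ref{table4}--\ref{table1}), constrains $\pi_1(\fh)$ by the normalizer of the translation subspace together with Lemma \ref{subgroups} and Remark \ref{Rem1}, solves the linear system \eqref{Bracketv31}--\eqref{vectors4} for the translation components of the lifted generators, and removes the coboundary directions by conjugating with a single translation $Ad((I_4,p))$, leaving the residual parameters $\lambda,\mu,a,b$ exactly where you predict (the degenerate/light-like subcases). The only caveat is that your write-up is a scheme rather than an execution: the case-by-case eliminations (e.g. that $K_1AN\ltimes\ell$, $AN\ltimes\ell$ and $AN_1\ltimes\mathbb{W}^2$ fail to have cohomogeneity one, and that $K_1N$ is excluded in Case I) still have to be carried out by the orbit-dimension counts the paper performs.
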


\begin{proof}
	Let  $\pi_{1}:\mathfrak{so}(3,1)\oplus_{\varphi}\R^{3,1}\longrightarrow \mathfrak{so}(3,1)$ denote the projection map on the first factor, which is a Lie algebra homomorphism and so $\pi_1(\fh)$ is a subalgebra of $\mathfrak{so}(3,1)$, where $\fh$ is the Lie algebra of $H$. Also $\ker \pi_1|_\fh$ is an ideal of $\fh$, and obviously $\ker \pi_1|_\fh =\fh \cap \R^{3,1}$. Hence $\pi_1(\fh)$ preserves $\fh \cap \R^{3,1}$, i.e. $\pi_1(\fh)(\fh \cap \R^{3,1})\subseteq \fh \cap \R^{3,1}$ by \eqref{braketrule}.  We mean by the normalizer of $\fh \cap \R^{3,1}$ in $\mathfrak{so}(3,1)$ the Lie subalgebra of $\mathfrak{so}(3,1)$ preserving $\fh \cap \R^{3,1}$. We consider the proof according to the dimension of $\fh \cap \R^{3,1}$.
	
\textbf{Case I.} $\dim(\fh\cap \R^{3,1})= 0$. The groups obtained in this case fill Table \ref{table1}. Since $\dim(\fh\cap \R^{3,1})= 0$, we must have $\dim(\pi_{1}(\fh))\geq 3$. By Lemma \ref{subgroups}, $\pi_1(\fh)$ can be one of the following Lie subalgebras
 $$\mathfrak{so}(3,1),\quad  \fk_{1}\oplus \fa\oplus \fn,\quad \fs (2,1),\quad \mathfrak{so} (3),\quad \R(\lambda Y_{\fk}^1 +\mu Y_\fa)\oplus \fn,$$
where $\lambda$ and $\mu$ are fixed real numbers. The subgroups $SO_\circ(2,1)=\exp(\fs (2,1))$ and $SO(3)=\exp(\fs (3))$, from this list, do not act with cohomogeneity one obviously. We claim that if $\pi_1(\fh)$ is conjugate to one of the remaining cases of the list, then $\fh$ does.
\begin{itemize}
	\item Let $\pi_1(\fh)=\fs(3,1)$, up to conjugacy.
Then there exist $ u,v,w,x, y,z\in \R^{3,1}$ such that
$$ \fh=\R (Y_{\mathfrak{k}}^{1}+u)+\R (Y_{\mathfrak{k}}^{2}+v)+\R (Y_{\mathfrak{k}}^{3}+w)+\R(Y_{\mathfrak{a}}+x)+ \R(Y_{\fn}^{1}+y)+\R(Y_{\fn}^{2}+z).$$ 
Hence $u,v,w,x,y$ and $z$ should satisfy the relations in \eqref{vectors4}.
Let $p=(u_{2},-u_{1},-v_{1},-x_{3})^{t}$. Then $Ad((I_{4},p))$ maps $Y^{1}_{\fk}+u, Y^{2}_{\fk}+v, Y^{3}_{\fk}+w, Y_{\fa}+x, Y^{1}_{\fn}+y$ and $Y^{2}_{\fn}+z$ to $Y^{1}_{\fk}, Y^{2}_{\fk}, Y^{3}_{\fk}, Y_{\fa}, Y^{1}_{\fn}$ and $Y^{2}_{\fn}$, respectively. Hence $Ad((I_{4},p))(\fh)=\mathfrak{so}(3,1)$. Therefore the action of $H$ is orbit-equivalent to the action of $SO_{\circ}(3,1)$.

\item Let $\pi_1(\fh)=\fk_1\oplus \fa\oplus \fn$. Then $\fh =\R(Y^{1}_{\fk}+u)+\R(Y_{\fa}+x)+\R(Y^{1}_{n}+y)+\R(Y^{2}_{n}+z)$. By using the relations in \eqref{Bracketv31} and \eqref{vectors31} one gets that
\begin{align*}
&u=(u_{1},u_{2},0,0)^{t},\qquad\qquad\ x=(0,0,x_{3},x_{4})^{t},\\ &y=(x_{3}+x_{4}, 0,u_{2},-u_{2})^{t},\quad z=(0,x_{3}+x_{4},-u_{1}, u_{1})^{t}.
\end{align*}
Let $p=(u_{2},-u_{1},-x_{4},-x_{3})^{t}$. Then $Ad((I_{4},p))$ maps $Y^{1}_{\fk}+u, Y_{\fa}+x, Y^{1}_{n}+y$ and $Y^{2}_{n}+z$ to $Y^{1}_{\fk}, Y_{\fa}, Y^{1}_{n}$ and $Y^{2}_{n}$, respectively. Hence $Ad((I_{4},p))(\fh)=\fk_{1}\oplus \fa\oplus \fn$. Therefore the action of $H$ is orbit-equivalent to the action of $K_{1}AN$.

\item Let $\pi_1(\fh)=\R(\lambda Y_{\fk}^1 +\mu Y_\fa)\oplus \fn$, where $\lambda$ and $\mu$ are fixed real numbers. So $\fh =\R(\lambda Y^{1}_{\fk}+\mu Y_{\fa}+x)+\R(Y^{1}_{\fn}+y)+\R(Y^{2}_{\fn}+z)$, for some $x,y,z\in\R^{3,1}$. First assume that $\lambda\mu\neq 0$. The relations in \eqref{Bracketv31} and \eqref{vectors31} imply that
$x=(x_{1},x_{2},x_{3},x_{4})^{t}, y=(\frac{1}{\mu}(x_{3}+x_{4}),0,\frac{1}{\lambda}x_{2},-\frac{1}{\lambda}x_{2})^{t}, z=(0,\frac{1}{\mu}(x_{3}+x_{4}),-\frac{1}{\lambda}x_{1},\frac{1}{\lambda}x_{1})^{t}$. Let $p=(\frac{1}{\lambda}x_{2},-\frac{1}{\lambda}x_{1},-\frac{1}{\mu}x_{4},-\frac{1}{\mu}x_{3})^{t}$. Then $ Ad((I_{4},p))$ maps $\lambda Y^{1}_{\fk}+\mu Y_{\fa}+x, Y^{1}_{\fn}+y$ and $Y^{2}_{\fn}+z$ to $\lambda Y^{1}_{\fk}+\mu Y_{\fa}, Y^{1}_{\fn}$ and $Y^{2}_{\fn}$, respectively. Hence $Ad((I_{4},p))(\fh)=\R(\lambda Y^{1}_{\fk}+\mu Y_{\fa})\oplus \fn$. Therefore the action of $H$ is orbit-equivalent to the action of $\exp(\R(\lambda Y^{1}_{\fk}+\mu Y_{\fa}))N$. \\
Now assume that $\lambda=0$, and so $\mu\neq 0$. 
Then  $\fh =\R(Y_{\fa}+x)+\R(Y^{1}_{\fn}+y)+\R(Y^{2}_{\fn}+z)$. The relations in \eqref{Bracketv31} and \eqref{vectors31} show that
$x=(0,0,x_{3},x_{4})^{t}, y=(x_{3}+x_{4},0,y_{3},-y_{3})^{t}, z=(0,x_{3}+x_{4},z_{3},-z_{3})^{t}$. Let $p=(y_{3},z_{3},-x_{4},-x_{3})^{t}$. Then $ Ad((I_{4},p))$ maps $Y_{\fa}+x, Y^{1}_{\fn}+y$ and $Y^{2}_{n}+z$ to $Y_{\fa}, Y^{1}_{\fn}$ and $Y^{2}_{\fn}$, respectively. Hence $Ad((I_{4},p))(\fh)=\fa\oplus \fn$. Therefore the action of $H$ is orbit-equivalent to the action of $AN$.\\
The case that $\mu=0$ and $\lambda\neq 0$ is excluded automatically, since the action of $K_1N$ is not of cohomogeneity one. (See \cite{AS}).
\end{itemize}

\textbf{Case II.} $\dim(\fh \cap \R^{3,1})=1$. The action is of cohomogeneity one, so $\dim(\pi_{1}(\fh))\geq 2$. Every one-dimensional space-like, time-like or light-like subspace of $\R^{3,1}$ is conjugate under $O(3,1)$ to $\R e_{1}, \R e_{4}$ or $\ell$, respectively. We can therefore assume that $\fh\cap \R^{3,1}$
is equal to one of these three one-dimensional subspaces.

\textbf{Case II-1.}   $\fh\cap \R^{3,1}=\R e_{1}$. The normalizer of $\R e_{1}$ in $\mathfrak{so}(3,1)= \fk\oplus \fa\oplus \fn$ is equal
to $\fk_{3}\oplus \fa\oplus \fn_{2}$, which implies that $\pi_{1}(\fh)\subseteq \fk_{3}\oplus \fa\oplus \fn_{2}$. 

If $\pi_1(\fh)=\fk_{3}\oplus \fa\oplus \fn_{2}$, then $\fh$ is of the form $ \R(Y^{3}_{\fk}+u)\oplus \R(Y_{\fa}+v)\oplus \R(Y^{2}_{\fn}+w)\oplus \R e_{1}$, where $u,v,w\in \R^{3,1}$. By using the relations in \eqref{Bracketv31} and \eqref{vectors31} we have
$$ u=(0,u_{2},u_{3},0)^{t},v=(0,0,v_{3},u_{2})^{t},w=(0,v_{3}+u_{2},u_{3},-u_{3})^{t}.$$
Let $p=(0,u_{3}, -u_{2}, -v_{3})^{t}$. Then $Ad((I_{4},p))$ maps $Y^{3}_{\fk}+u, Y_{\fa}+v$ and $ Y^{2}_{\fn}+w$ to $Y^{3}_{\fk}, Y_{\fa}$ and $ Y^{2}_{\fn}$, respectively. Hence $Ad((I_{4},p))(\fh) =\R Y^{3}_{\fk}+\R Y_{\fa}+\R Y^{2}_{\fn}\oplus \R e_{1}$. Thus the action of $H$ is orbit-equivalent to the action of 
$SO_\circ(2,1)\times \R e_{1}$, and the action of this group is obviously of \co.

If $\pi_1(\fh)\subsetneqq\fk_{3}\oplus \fa\oplus \fn_{2}=\mathfrak{so}(2,1)$, (we remind that $\dim \pi_1(\fh)\geqslant 2$), then a well-known fact about two dimensional subgroups of $SO_\circ (2,1)$ says that $\exp(\pi_1(\fh))$  is conjugate to $AN_2$. Hence $\fh$, as a vector space, is of the form
 $\R(Y_{\fa}+u)+\R(Y^{2}_{\fn}+v)\oplus \R e_{1}$. 
 The relations in \eqref{Bracketv31} and \eqref{vectors31} show that $u=(u_{1},0,u_{3},u_{4})^{t}$ and  $v=(0,u_{3}+u_{4},v_{3},-v_{3})^{t}$. Let $p=(0,v_{3}, -u_{4}, -u_{3})^{t}$. Then $Ad((I_{4},p))$ maps $Y_{\fa}+u$ and $Y^{2}_{\fn}+v$ to  $Y_{\fa}+u_{1} e_{1}$ and $Y^{2}_{\fn}$, respectively. Hence $Ad((I_{4},p))(\fh) =\R(Y_{\fa}+\lambda e_{1})+\R Y^{2}_{\fn}\oplus \R e_{1}$, where $\lambda$ is a fixed real number. Thus the action of $H$ is orbit-equivalent to the action of 
$ AN_{2}\times \R e_{1}$.

\textbf{Case II-2.} $\fh\cap \R^{3,1}=\R e_{4}$. The normalizer of $\R e_{4}$ in $\mathfrak{so}(3,1)= \fk\oplus \fa\oplus \fn$ is equal
to $\fk$, which implies that $\pi_{1}(\fh)\subseteq \fk=\fs (3)$. If $\fh\subsetneqq \fs (3)$, then by the fact that $\fs (3)$ has no $2$-dimensional subalgebra, $\dim (\fh)\leqslant 2$, and so $H$ can not act with cohomogeneity one on $\R^{3,1}$.
 If $\fh=\fs (3)$, then $\fh$ is of the form $(\R(Y^{1}_{\fk}+u)+\R(Y^{2}_{\fk}+v)+\R(Y^{3}_{\fk}+w))\oplus \R e_{4}$ with $u,v,w\in \R^{3,1}$. Hence the relations in \eqref{Bracketv31} and \eqref{vectors31} show that $u=(u_{1},u_{2},0,0)^{t},v=(v_{1},0,u_{2},0)^{t}$ and $w=(0,v_{1},-u_{1},0)^{t}$. Let $p=(u_{2}, -u_{1},-v_{1},0)^{t}$. Then  $Ad((I_{4},p))$ maps $Y^{1}_{\fk}+u, Y^{2}_{\fk}+v$ and $Y^{3}_{\fk}+w$ to  $Y^{1}_{\fk}, Y^{2}_{\fk}$ and $Y^{3}_{\fk}$, respectively. Hence
$Ad((I_{4},p))(\fh) = \fk\oplus \R e_{4}$. Thus the action of $H$ is orbit-equivalent to the action of $SO(3)\times \R e_{4}$, where its action on $\R^{3,1}$ is obviously of \co.

\textbf{Case II-3.} Assume that $\fh\cap \R^{3,1}= \ell$. The normalizer of $\ell$ in $\mathfrak{so}(3,1)= \fk\oplus \fa\oplus \fn$ is equal
to $\fk_{1}\oplus \fa\oplus \fn$, which implies thet $\pi_{1}(\fh)\subseteq \fk_{1}\oplus \fa\oplus \fn$. 

We claim that the case $\pi_{1}(\fh)=\fk_{1}\oplus \fa\oplus \fn$ does not imply cohomogeneity one action. If $\pi_{1}(\fh)=\fk_{1}\oplus \fa\oplus \fn$, then $\fh$ is of the form $\R (Y^{1}_{\fk}+u)+\R (Y_{\fa}+v)+\R (Y^{1}_{\fn}+w)+\R (Y^{2}_{\fn}+x)+\ell$ and so the same argument of that of the second item of Case I shows that the action of $H$ is orbit equivalent to the action of $K_{1}AN\ltimes \ell$. We claim that the action of this group is not of \co. Let $q=(q_1,q_2,q_3,q_4)^t\in \R^{3,1}$. If $q_{3}+q_{4}\neq 0$, then $\dim H(q)=4$, if $q_{3}+q_{4}=0$ and $q$ is not the origin, then $\dim H(q)=2$ and if $q$ is the origin $\dim H(q)=1$. Hence $H$ does not act with cohomogeneity one on $\R^{3,1}$. Thus the case $\fh\cap \R^{3,1}= \ell$, where $\pi_{1}(\fh)=\fk_{1}\oplus \fa\oplus \fn$, is excluded.

Therefore, $\pi_{1}(\fh)\subsetneqq\fk_{1}\oplus \fa\oplus \fn$, and so $\dim\pi_1(\fh)\in\{2,3\}$. 

\textbf{Subcase II-3-a.} Let $\dim(\pi_{1}(\fh))= 2$. Considering Remark \ref{Rem1}, $\fh$ as a vector space can be one of the following subspaces:

(i) $\R(Y^{1}_{\fk}+u)+\R(Y_{\fa}+v)\oplus \ell$,

(ii) $\R(Y_{\fa}+u)+\R(Y^{1}_{\fn}+v)\oplus \ell$,

(iii) $\R(Y^{1}_{\fn}+u)+\R(Y^{2}_{\fn}+v)\oplus \ell$.


In case (i), the relations in \eqref{Bracketv31} and \eqref{vectors31} show that $u=(u_{1},u_{2},0,0)^{t}$ and $v=(0,0,v_{3},v_{4})^{t}$. Let $p=(u_{2},-u_{1}, -v_{4}, -v_{3})^{t}$. Then $Ad((I_{4},p))$ maps $Y^{1}_{\fk}+u$ and $Y_{\fa}+v$ to $Y^{1}_{\fk}$ and $Y_{\fa}$, respectively. Hence $Ad((I_{4},p))(\fh) =\fk_{1}\oplus \fa\oplus \ell$. Therefore the action of $H$ is orbit-equivalent to the action of $K_1A\ltimes \ell=SO(2)\times SO_\circ(1,1)\ltimes \ell$. For any point  $q=(q_1,q_2,q_3,q_4)^t\in\R^{3,1}$, where $q_{2}q_{3}\neq 0$,  $\dim H(q)=3$ and so $H$ acts with cohomogeneity one on $\R^{3,1}$.

In case (ii), by using the relations in \eqref{Bracketv31} and \eqref{vectors31} one gets that $u=(0,u_{2},u_{3},u_{4})^{t}$ and $v=(u_{3}+u_{4},0,v_{3},-v_{3})^{t}$. Let $p = (v_{3}, 0, -u_{4}, -u_{3})^{t}$. Then $Ad((I_{4},p))$ maps $Y_{\fa}+u$ and $Y^{1}_{\fn}+v$ to $Y_{\fa}+u_{2}e_{2}$ and $Y^{1}_{\fn}$, respectively. Hence $Ad((I_{4},p))(\fh)=\R(Y_{\fa}+\lambda e_{2})+\R Y^{1}_{\fn}\oplus \ell$, where $\lambda$ is a fixed real number. Thus the action of $H$ is orbit-equivalent to the action of $\exp(\R(Y_{\fa}+\lambda e_{2}))N_{1}\ltimes \ell$. For any point $q=(q_1,q_2,q_3,q_4)^t\in\R^{3,1}$, where $q_{3}+q_{4}\neq 0$,  $\dim H(q)=3$ and so $H$ acts with cohomogeneity one on $\R^{3,1}$.


In case (iii), the relations in \eqref{Bracketv31} and \eqref{vectors31} show that $u=(u_{1},u_{2},u_{3},-u_{3})^{t}$ and $v=(u_{2},v_{2},v_{3},-v_{3})^{t}$, respectively. Let $p = (u_{3},v_{3},-u_{1},0)^{t}$. Then $Ad((I_{4},p))$ maps $(Y^{1}_{\fn}+u)$ and $Y^{2}_{\fn}+v$ to $Y^{1}_{\fn}+u_{2}e_{2}$ and $Y^{2}_{\fn}+u_{2}e_{1}+(v_{2}-u_{1})e_{2}$, respectively. Hence $Ad((I_{4},p))(\fh)=\R(Y^{1}_{\fn}+\lambda e_{2})+\R (Y^{2}_{\fn}+\lambda e_{1}+\mu e_{2})\oplus \ell$, where $\lambda$ and $\mu$ are fixed real numbers.Thus the action of $H$ is orbit-equivalent to the action of $\exp(\R(Y^{1}_{\fn}+\lambda e_{2})+\R (Y^{2}_{\fn}+\lambda e_{1}+\mu e_{2}))\ltimes \ell$. 
 Let $q=(q_1,q_2,q_3,q_4)^t$ be an arbitrary point of $\R^{3,1}$. If $\lambda\neq 0$, then for $q_{3}+q_{4}=0$, we have $\dim H(q)=3$ and if $\lambda=0$, then for $q_{3}+q_{4}\neq -\mu$, we get $\dim H(q)=3$. Thus $H$ acts with cohomogeneity one on $\R^{3,1}$.

%
 
\textbf{Subcase II-3-b.}  Let $\dim(\pi_{1}(\fh))= 3$. By Lemma \ref{subgroups}, any three dimensional Lie subalgebra of $\fk_{1}\oplus\fa\oplus\fn$ is of the form $\R(\lambda Y^{1}_{\fk}+\mu Y_{\fa})\oplus \fn$. Hence $\fh$, as a vector space, is of the form
$\fh=\R(a Y^{1}_{\fk}+b Y_{\fa}+w)+\R(Y^{1}_{\fn}+u)+\R(Y^{2}_{\fn}+v)\oplus \ell$,
where $a,b\in \R$, $u,v,w\in \R^{3,1}$ and at least one of $a$ or $b$ is not zero.

The relations in \eqref{brackets31} show that $[Y^{1}_{\fn},aY^{1}_{\fk}+bY_{\fa}]=aY^{2}_{\fn}+bY^{1}_{\fn}$, and so there are fixed real numbers $a_{1},a_{2}$ and $a_{3}$ such that
$aY^{2}_{\fn}+bY^{1}_{\fn}=a_{1}Y^{1}_{\fn}+a_{2}Y^{2}_{\fn}+a_{3}(aY^{1}_{\fk}+bY_{\fa})$, which implies that $a_{1}=b, a_{2}=a, a_{3}a=0$ and $a_{3}b=0$.
\begin{itemize}
\item If $ab\neq 0$, then $\fh=\R(Y^{1}_{\fn}+u)+\R(Y^{2}_{\fn}+v)+\R(aY^{1}_{\fk}+bY_{\fa}+w)\oplus \ell$. The relations in \eqref{Bracketv31} and \eqref{vectors31} imply that $u=(u_{1},u_{2},u_{3},-u_{3})^{t}, v=(0,u_{1},v_{3},-v_{3})^{t}$ and $w=(-av_{3},au_{3},w_{3},bu_{1}-w_{3})^{t}$. Let $p = (u_{3},v_{3},-u_{1},0)^{t}$. Then $Ad((I_{4},p))$ maps $Y^{1}_{\fn}+u, Y^{2}_{\fn}+v$ and $aY^{1}_{\fk}+bY_{\fa}+w$ to $Y^{1}_{\fn}+u_{2}e_{2}, Y^{2}_{\fn}+u_{2}e_{1}$ and $aY^{1}_{\fk}+bY_{\fa}+w_{3}(e_{3}-e_{4})$, respectively. Hence $Ad((I_{4},p))(\fh)=\R (Y^{1}_{\fn}+\lambda e_{2})+ \R (Y^{2}_{\fn}+\lambda e_{1})+\R (aY^{1}_{\fk}+bY_{\fa}+\mu(e_{3}-e_{4}))\oplus \ell$, where $\lambda$ and $\mu$ are fixed real numbers. Thus the action of $H$ is orbit-equivalent to the action of $\exp(\R (Y^{1}_{\fn}+\lambda e_{2})+ \R (Y^{2}_{\fn}+\lambda e_{1})+\R (aY^{1}_{\fk}+bY_{\fa}+\mu(e_{3}-e_{4})))\ltimes \ell$ and its action is orbit-equivalent to the action of $\exp(\R (Y^{1}_{\fn}+\lambda e_{2})+ \R (Y^{2}_{\fn}+\lambda e_{1})+\R (aY^{1}_{\fk}+bY_{\fa}))\ltimes \ell$. For any point $q\in\R^{3,1}$, where $q_{3}+q_{4}\neq \pm \lambda$, we have $\dim(H(p))=3$ and so $H$ acts with cohomogeneity one on $\R^{3,1}$.

\item If $a=0$, then we may assume that $b=1$, and so $\fh=\R(Y^{1}_{\fn}+u)+\R(Y^{2}_{\fn}+v)+\R(Y_{\fa}+w)\oplus \ell$. Hence by using the third part of the second item of Case I one gets that $\fh$ is conjugate to $\fa\oplus \fn\oplus \ell$. Thus the action of $H$ is orbit-equivalent to the action of $AN\ltimes \ell$. We claim that the action of this group is not of \co. Let $q=(q_1,q_2,q_3,q_4)^t\in \R^{3,1}$. If $q_{3}+q_{4}\neq 0$, then $\dim H(q)=4$, if $q_{3}+q_{4}=0$, then $\dim H(q)=1$. Thus $H$ does not act with cohomogeneity one on $\R^{3,1}$. Therefore, the case that $a=0$ is excluded.
		 
\item If $b=0$, then we may assume that $a=1$, and so without lossing the generality  $\fh=\R(Y^{1}_{\fk}+u)+\R(Y^{1}_{\fn}+v)+\R(Y^{2}_{\fn}+w)\oplus \ell$. By using the relations in \eqref{Bracketv31} and \eqref{vectors31} one gets that $u=(u_{1},u_{2},u_{3},-u_{3})^{t}, v=(v_{1},0,u_{2},-u_{2})^{t}$ and $w=(0,v_{1},-u_{1},u_{1})^{t}$. Let $p = (u_{2},-u_{1},-v_{1},0)^{t}$. Then $Ad((I_{4},p))$ maps $Y^{1}_{\fk}+u, Y^{1}_{\fn}+v$ and $Y^{2}_{\fn}+w$ to $Y^{1}_{\fk}+u_{3}(e_{3}-e_{4}), Y^{1}_{\fn}$ and $Y^{2}_{\fn}$, respectively. Hence $Ad((I_{4},p))(\fh)=\R (Y^{1}_{\fk}+\lambda(e_{3}-e_{4}))+ \R Y^{1}_{\fn}+\R Y^{2}_{\fn}\oplus \ell$, where $\lambda$ is a fixed real number. Thus the action of $H$ is orbit-equivalent to the action of 
$K_{1}N\ltimes \ell$. For any point $q=(q_1,q_2,q_3,q_4)^t\in\R^{3,1}$, where $q_{3}+q_{4}\neq 0$,  $\dim H(q)=3$ and so $H$ acts with cohomogeneity one on $\R^{3,1}$.
\end{itemize}

\textbf{Case III.} $\dim(\fh \cap \R^{3,1})=2$. Every two-dimensional space-like, time-like or light-like subspace of $\R^{3,1}$ is conjugate under $O(3,1)$ to $\R e_{1}\oplus \R e_{2}, \R e_{3}\oplus \R e_{4} $ or $\R e_{2}\oplus \ell$, respectively. We can therefore assume that $\fh\cap \R^{3,1}$  is equal to one of these three two-dimensional subspaces.

\textbf{Case III-1.}  $\fh\cap \R^{3,1}=\R e_{1}\oplus \R e_{2}$. The normalizer of $\R e_{1}\oplus \R e_{2}$ in $\mathfrak{so}(3,1)= \fk\oplus \fa\oplus \fn$ is equal to $\fa$, which implies that $\pi_{1}(\fh)\subseteq\fa$. Hence $\fh$ is of the form $\R(Y_{\fa}+u)\oplus (\R e_{1}\oplus \R e_{2})$, where  $u\in \R^{3,1}$. Let $p = (0,0,-u_{4},-u_{3})^{t}$, then $Ad((I_{4},p))$ maps $Y_{\fa}+u$ to $Y_{\fa}+ u_{1} e_{1}+u_{2} e_{2}$. Hence $Ad((I_{4},p))(\fh) = \R(Y_{\fa}+\lambda e_{1} +\mu e_{2}))\oplus (\R e_{1}\oplus \R e_{2})$, where $\lambda$ and $\mu$ are fixed real numbers. Thus the action of $H$ is orbit-equivalent to the action of $\exp(\R(Y_{\fa}+\lambda e_{1} +\mu e_{2}))\ltimes (\R e_{1}\oplus\R e_{2})$,  and its action is orbit-equivalent to the action of $SO_\circ(1,1)\times \R^2$. The action of this group is clearly with \co.

\textbf{Case III-2.} $\fh\cap \R^{3,1}= \R e_{3}\oplus \R e_{4}$. The normalizer of $\R e_{3}\oplus \R e_{4}$ in $\mathfrak{so}(3,1)= \fk\oplus \fa\oplus \fn$ is equal to $\fk_{1}$, which implies that $\pi_{1}(\fh)\subseteq\fk_{1}$. Hence $\fh$ is of the form $\R(Y^{1}_{\fk}+u)\oplus (\R e_{3}\oplus \R e_{4})$, where $u\in \R^{3,1}$. Let $p = (u_{2},-u_{1},0,0)^{t}$, then $Ad((I_{4},p))$ maps $Y_{\fk}^{1}+u$ to $Y_{\fk}^{1}+ u_{3}e_{3}+ u_{4}e_{4}$. Hence $Ad((I_{4},p))(\fh)=\R(Y_{\fk}^{1} + \lambda e_{3}+\mu e_{4})\oplus (\R e_{3}\oplus \R e_{4})$, where $\lambda$ and $\mu$ are fixed real numbers. Thus the action of $H$ is orbit-equivalent to the action of $\exp(\R(Y_{\fk}^{1} + \lambda e_{3}+\mu e_{4}))\ltimes (\R e_{3}\oplus \R e_{4})$, and its action is orbit-equivalent to the action of $SO(2)\times \R^{1,1} $. Obviously, the action of this group on $\R^{3,1}$ is of \co.

\textbf{Case III-3.} $\fh\cap \R^{3,1}= \R e_{2}\oplus \ell$. The normalizer of $\R e_{2}\oplus \ell$ in $\mathfrak{so}(3,1)= \fk\oplus \fa\oplus \fn$ is equal to $\fa\oplus \fn_{1}$, which implies that $\pi_{1}(\fh)\subseteq\fa\oplus \fn_{1}$. 

We claim that the case $\pi_{1}(\fh)=\fa\oplus \fn_{1}$ does not lead to a cohomogeneity one action of $H$ on $\R^{3,1}$. If $\pi_{1}(\fh)=\fa\oplus \fn_{1}$, then  $\fh=\R(Y_{\fa}+u)+\R(Y^{1}_{\fn}+v)\oplus (\R e_{2}\oplus \ell)$, where  $u,v\in \R^{3,1}$. Let $p=(v_{3},0,-u_{4},-u_{3})^{t}$. Then $Ad((I_{4},p))$ maps $Y_{\fa}+u$ and $Y^{1}_{\fn}+v$ to $Y_{\fa}+u_{2} e_{2}$ and $Y^{1}_{\fn}$, respectively. Hence $Ad((I_{4},p))(\fh) = \R (Y_{\fa}+\lambda e_{2})+\R Y^{1}_{\fn}\oplus (\R e_{2}\oplus \ell)$, where $\lambda$ is a fixed real number. Hence the action of $H$ is orbit-equivalent to the action of $\exp(\R (Y_{\fa}+\lambda e_{2})+\R Y^{1}_{\fn})\ltimes \mathbb{W}^{2}$, which its action is orbit-equivalent to the action of $AN_{1}\ltimes \mathbb{W}^{2}$. We show that the action of this group on $\R^{3,1}$ is not of \co. 
 Let $q=(q_1,q_2,q_3,q_4)^t$ be an arbitrary point of $\R^{3,1}$. If $q_{3}+q_{4}\neq 0$, then $\dim H(q)=4$, and if $p_{3}+p_{4}=0$, then $\dim H(q)=2$. Thus $H$ does not act with cohomogeneity one on $\R^{3,1}$, and so the case $\fh\cap \R^{3,1}= \R e_{2}\oplus \ell$, with 
 $\pi_{1}(\fh)=\fa\oplus \fn_{1}$, is excluded.

If $\pi_{1}(\fh)\subsetneq\fa\oplus \fn_{1}$, then $\fh$ is of the form
$\R(aY_{\fa}+bY^{1}_{\fn}+u)\oplus (\R e_{2}\oplus \ell)$, where $a,b\in \R$, $u\in \R^{3,1}$ and at least one of $a$ or $b$ is not zero. 
\begin{itemize}
\item If $a\neq 0$, then $\fh=\R(Y_{\fa}+\frac{b}{a}Y^{1}_{\fn}+u)\oplus (\R e_{2}\oplus \ell)$. Therefore from Remark \ref{Rem1} there exist $(J,p_{0})\in \Iso(\R^{3,1})$, preserving $\mathbb{W}^2$, such that $Ad((J,p_{0}))$ maps $Y_{\fa}+\frac{b}{a}Y^{1}_{\fn}+u$ to $Y_{\fa}+u^{\prime}$, and we also denote $u^{\prime}$ by $u$. let $p = (0,0,-u_{4},-u_{3})^{t}$. Then $Ad((I_{4},p))$ maps $Y_{\fa}+u$ to $Y_{\fa}+u_{1}e_{1}+u_{2}e_{2}$. Hence $Ad((I_{4},p))(\fh) = \R(Y_{\fa}+ \lambda e_{1}+\mu e_{2})\oplus \mathbb{W}^{2}$, where $\lambda$ and $\mu$ are fixed real numbers. Thus the action of $H$ is orbit-equivalent to the action of 
$\exp(\R(Y_{\fa}+ \lambda e_{1}))\ltimes \mathbb{W}^{2}$. For any point $q=(q_1,q_2,q_3,q_4)^t\in\R^{3,1}$, where, $q_{3}+q_{4}\neq 0$, we have $\dim H(q)=3$ and so $H$ acts with cohomogeneity one on $\R^{3,1}$.


\item If $a=0$, then we may assume that $b=1$, and so $\fh=\R (Y^{1}_{\fn}+u)\oplus (\R e_{2}\oplus \ell)$. Let $p=(u_{3},0,-u_{1},0)^{t}$. Then $Ad((I_{4},p))$ maps $Y^{1}_{\fn}+u$ to $Y^{1}_{\fn}+u_{2}e_{2}+(u_{3}+u_{4})e_{4}$. Hence $Ad((I_{4},p))(\fh) = \R (Y^{1}_{\fn}+ \mu e_{2}+\lambda e_{4})\oplus (\R e_{2}\oplus \ell)$, where $\lambda$ and $\mu$ are fixed real numbers. Hence the action of $H$ is orbit-equivalent to the action of 
$\exp(\R (Y^{1}_{\fn}+\lambda e_{4}))\ltimes \mathbb{W}^{2}$. This group acts with cohomogeneity one on $\R^{3,1}$, since we have $\dim H(q)=3$ for any point $q\in\R^{3,1}$, where $q_3+q_4\neq 0$.
\end{itemize}

\textbf{Case IV:} $\dim(\mathfrak{h}\cap \R^{3,1})=3$. Every three-dimensional Riemannian, Lorentzian and degenerate subspace of $\R^{3,1}$ is conjugate under $O(3,1)$ to $\R^{3}, \R^{2,1}$ and $\mathbb{W}^{3}$, respectively. For dimension reasons it follows that the action of $H$ is orbit-equivalent to the action of one of the three translation subgroups $\R^{3}, \R^{2,1}$ or $\mathbb{W}^{3}$.
\end{proof}
%
%
\section{Proper and nonproper actions}
In this section we determine the proper and nonproper cohomogeneity one  actions on $\R^{3,1}$, which induced by the Lie subgroups listed in Tables \ref{table1}-\ref{table4}.

\begin{theorem}\label{nonproper}
	Let $H$ be a connected Lie subgroup of $\Iso(\R^{3,1})$, which acts isometrically and with cohomogeneity one on $\R^{3,1}$. Then the action is nonproper if and only if $H$ is conjugate to one of the following Lie groups in $O(3,1)\ltimes \R^{3,1}$.
	
	(a) $SO_\circ(1,1)\times \R^{2}$.
	
	(b) $\exp(\R(Y_{\fa}+\lambda e_{1}))\ltimes \mathbb{W}^{2}$, where $\lambda=0$.
	
	(c) $\exp(\R(Y^{1}_{\fn}+\mu e_{4}))\ltimes \mathbb{W}^{2}$, where $\mu=0$.
	
	(d) $SO_\circ(2,1)\times \R e_{1}$. 
	
	(e) $AN_{2}\times \R e_{1}$.
	
	(f) $SO_{\circ}(2)\times SO_{\circ}(1,1)\ltimes \ell$.
	
	(g) $\exp(\R(Y_{\fa}+\lambda e_{2}))N_{1}\ltimes \ell$.
	
	(h) $K_{1}N\ltimes \ell$.
	
	(i) $\exp(\R (Y^{1}_{\fn}+\lambda e_{2})+ \R (Y^{2}_{\fn}+\lambda e_{1})+\R (aY^{1}_{\fk}+bY_{\fa}))\ltimes \ell$.
	
	(j) $SO_{\circ}(3,1)=KAN$.
	
	(h) $ K_{1}AN$.
	
	(k) $\exp(aY^{1}_{\fk}+bY_{\fa})N$.
	
	(l) $AN$.
	
	(m) $\exp(\R(Y^{1}_{\fn}+\lambda e_{2})+\R (Y^{2}_{\fn}+\lambda e_{1}+\mu e_{2}))\ltimes \ell$.
\end{theorem}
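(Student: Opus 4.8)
The plan is to settle (non)properness group by group, using the sequential criterion recalled in Section~\ref{Lie algebra}: the action of $H$ is nonproper precisely when there exist $g_n\to\infty$ in $H$ and $x_n$ in $\R^{3,1}$ such that $x_n$ and $g_nx_n$ both converge. I would prove the two implications of the ``if and only if'' separately, writing each isometry as $(V,v)$ and using that $g_n\to\infty$ forces $V_n\to\infty$ in $O(3,1)$ or $v_n\to\infty$. Throughout I rely on the explicit coordinate formulas for $Y_\fa p$, $Y_\fn^1 p$, $Y_\fn^2 p$ and on the commutators \eqref{brackets31}.

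Two cases are immediate and account for the groups \emph{absent} from the list. If $H$ is one of the pure translation groups of Table~\ref{table1}, then $x_n\to x$ and $x_n+t_n\to y$ force $t_n\to y-x$, so the action is proper; and if $L(H)$ is relatively compact, as for $SO(2)\times\R^{1,1}$ and $SO(3)\times\R e_4$, then writing $g_n=(R_n,v_n)$ with $R_n$ bounded, any escape forces $v_n\to\infty$, whence $g_nx_n=R_nx_n+v_n\to\infty$ for bounded $x_n$. At the opposite extreme is the \emph{boost mechanism} for nonproperness: whenever $H$ contains a one-parameter subgroup of its linear part of the form $\exp(t(cY_\fk^1+bY_\fa))$ with $b\neq0$ and no translation coupled to it (in particular the pure boost $\exp(\R Y_\fa)$), then since $Y_\fk^1$ fixes $e_3,e_4$ and $Y_\fa(e_3-e_4)=-(e_3-e_4)$ one gets $\exp(n(cY_\fk^1+bY_\fa))(e_3-e_4)=e^{-bn}(e_3-e_4)\to0$, so $x_n\equiv e_3-e_4$ and $g_n=\exp(n(cY_\fk^1+bY_\fa))$ witness nonproperness. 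This disposes of (a), (d), (e), (f), (j), $K_1AN$, (k), (l), and the subcase $b\neq0$ of (i), none needing the translation part.

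The remaining groups have unipotent linear part, and nonproperness must come from cancelling an $\fn$-orbit displacement against the translation part $T$ ($\ell$ or $\mathbb{W}^2$). When $H$ contains a pure parabolic $\exp(\R Y_\fn^1)$ uncoupled to any translation (as in (c) with $\mu=0$, in (g) via its $N_1$-factor, and in $K_1N\ltimes\ell$), I fix $q$ with $q_3+q_4=0$ and $q_1\neq0$; the formula for $Y_\fn^1 p$ gives $\exp(nY_\fn^1)q-q=-nq_1(e_3-e_4)\in\ell\subseteq T$, so $g_n=(\exp(nY_\fn^1),nq_1(e_3-e_4))\in H$ fixes $q$ while $g_n\to\infty$. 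The subtle families are those in which \emph{both} $\fn$-directions are coupled to uncancelable $e_1,e_2$-translations, namely (m) and the subcase $b=0$ of (i). Here no single generator fixes a point modulo $T$; instead, for a point $q$ I compute, using $(Y_\fn^i)^3=0$, the transverse ($e_1$- and $e_2$-) coordinates of $\exp(sY_\fn^1+tY_\fn^2)q$ plus the accumulated coupled translation and require them bounded as $(s,t)\to\infty$. Setting $\rho=q_3+q_4$, this is a homogeneous linear system with matrix $\left(\begin{smallmatrix}\rho&\lambda\\\lambda&\rho+\mu\end{smallmatrix}\right)$; a root of $\rho^2+\mu\rho-\lambda^2=0$ makes it singular, and escaping along the null direction (while an $\ell$-translation fixes the $e_3-e_4$ coordinate and $e_3+e_4$ is automatically invariant) gives $g_n\to\infty$ with $g_nq$ bounded. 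As such a root always exists, (m) and (i) are nonproper for all parameter values.

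The main obstacle is the one-parameter coupled families (b) and (c), where the dichotomy is genuinely parameter-dependent. For $\exp(\R(Y_\fa+\lambda e_1))\ltimes\mathbb{W}^2$ the element $\exp(t(Y_\fa+\lambda e_1))$ has $e_1$-coordinate $q_1+t\lambda$ on every $q$ (the boost fixes $e_1$, and $\mathbb{W}^2=\R e_2\oplus\ell$ cannot correct $e_1$), while for $\exp(\R(Y_\fn^1+\mu e_4))\ltimes\mathbb{W}^2$ the relation $[Y_\fn^1,e_4]=e_1$ forces the $e_1$-coordinate to grow like $q_1+t(q_3+q_4)+\tfrac{\mu}{2}t^2$. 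In both cases a single transverse coordinate must stay bounded along the only available one-parameter escape, so boundedness forces the parameter to vanish: these groups are proper for $\lambda\neq0$, resp.\ $\mu\neq0$, and nonproper otherwise, matching (b) and (c). The crux throughout is thus the same computation — track the orbit displacement modulo $T$ and decide when the surviving transverse drift, a possibly degenerate quadratic in the linear parameters, can be killed along an escaping direction — with the number of available linear parameters being exactly what separates the unconditionally nonproper families (m), (i) from the conditionally proper families (b), (c).
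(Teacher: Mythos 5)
Your proposal is correct and follows essentially the same route as the paper: nonproperness for all groups except (m) comes from a noncompact subgroup fixing a point (the paper uses the origin-stabilizer directly, you use explicit shrinking/fixed witnesses, which amounts to the same mechanism), and for (m) with $\lambda\neq 0$ your singular $2\times 2$ system with determinant $\rho^{2}+\mu\rho-\lambda^{2}$ and escape along its kernel is exactly the paper's choice of $\alpha$ with $\alpha^{2}+\mu\alpha-\lambda^{2}=0$ and the sequence $(t_n,s_n)=n(\alpha+\mu,-\lambda)$ at the point $(0,0,\alpha,0)^{t}$. The converse direction (properness of the complementary list via boundedness of the transverse coordinates) likewise reproduces the paper's proof of Theorem \ref{proper}.
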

\begin{proof}
 We prove that the action of any of the mentioned Lie groups is nonproper, then the proof will be a consequence of Theorem \ref{main} and the proof of Theorem \ref{proper}. All the Lie groups in cases (a) to (l), which belong to Tables (2), (3) or (4) cause a nonproper action, since in each case $H$ has a noncompact closed Lie subgroup preserving the origin. So to complete the proof we show that the action of $H=\exp(\R(Y^{1}_{\fn}+\lambda e_{2})+\R (Y^{2}_{\fn}+\lambda e_{1}+\mu e_{2}))\ltimes \ell$ is nonproper. If $\lambda=0$, then the noncompact closed subgroup $\exp (\R Y_{\fn}^1)$ acts nonproperly, since it preserves the origin, and so the action of $H$ is nonproper. Let $\lambda\neq 0$. Any element of $H$ is of the form 
 $(C_{t,s},c_{t,s,v})\in SO_{\circ}(3,1)\ltimes \R^{3,1}$, where $t,s,v\in \R$ and 
 $$C_{t,s}=\left(\begin{array}{cccc} 1&0&t&t\\0&1&s&s\\-t&-s&1-\frac{t^2+s^2}{2}&-\frac{t^2+s^2}{2}\\t&s&\frac{t^2+s^2}{2}&1+\frac{t^{2}+s^{2}}{2}
 \end{array}\right)$$
 and
 $$c_{t,s,v}=\left(\begin{array}{c} \lambda s\\\lambda t+\mu s\\ v\\-v
 \end{array}\right).$$
Let $\alpha$ be a root of the equation $x^2+\mu x-\lambda^2=0$. Consider the real sequences $\{t_{n}=(\alpha+\mu)n\},\{s_{n}=-\lambda n\}$ and $\{v_{n}=\frac{t_n^2+s_n^2}{2}\alpha\}$. Let $\{X_{n}=(x_{n},y_{n},z_{n},w_{n})^{t}\}=(0,0,\alpha,0)^t$ be a fixed sequence in $\R^{3,1}$. Then the sequence $\{g_n=(C_{t_n,s_n},c_{t_n,s_n,v_n})\}$ in $H$ has no convergent subsequence, while the two sequences $\{g_{n}.X_{n}\}$ and $\{X_{n}\}$ are convergent.
\end{proof}
\begin{theorem}\label{proper}
Let $H$ be a connected Lie subgroup of $\Iso(\R^{3,1})$ which acts isometrically and with cohomogeneity one on $\R^{3,1}$. Then the action is proper if and only if $H$ is conjugate to one of the following Lie groups in $O(3,1)\ltimes \R^{3,1}$.
	
(a) A pure translation group.
	
(b)  The standard embedding of $SO(2)\times \R^{1,1}$ in $SO_\circ (3,1)\ltimes \R^{3,1}$.

(c) The standard imbedding of $SO(3)\times \R e_{4}$ in $SO_\circ (3,1)\ltimes \R^{3,1}$.

(d) $\exp(\R(Y_{\fa}+\lambda e_{1}))\ltimes \mathbb{W}^{2}$, where $\lambda$ is a fixed nonzero real number.

(e) $\exp(\R(Y^{1}_{\fn}+\mu e_{4}))\ltimes \mathbb{W}^{2}$, where $\mu$ is a fixed nonzero real number.\\
In particular, $H$ is closed in $\Iso(\R^{3,1})$.
\end{theorem}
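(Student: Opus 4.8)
The plan is to combine the complete classification with its companion nonproperness result, and then verify properness directly only for the short remaining list. Indeed, Theorem~\ref{main} asserts that every cohomogeneity one action is orbit-equivalent to the action of exactly one group in Tables~\ref{table1}--\ref{table4}, while Theorem~\ref{nonproper} shows that every such group \emph{except} those in (a)--(e) acts nonproperly; comparing the two lists shows that (a)--(e) is precisely the complement of the nonproper list inside Tables~\ref{table1}--\ref{table4} (in particular, the two parameter-dependent members of Table~\ref{table2} split according to whether the parameter vanishes). Hence it suffices to prove that each group in (a)--(e) acts properly. I would do this through the sequential criterion recalled in Section~\ref{Lie algebra}: given $x_n\to x$ and $g_nx_n\to y$ with $g_n=(A_n,a_n)\in O(3,1)\ltimes\R^{3,1}$, one must extract a convergent subsequence of $g_n$, and since $a_n=g_nx_n-A_nx_n$, the whole problem reduces to controlling the linear parts $A_n$.

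For (a)--(c) this control is immediate because the linear projection is relatively compact. For a pure translation group $A_n=I$ and the claim is trivial. For (b) and (c) the linear projection is $SO(2)$, respectively $SO(3)$, hence compact; passing to a subsequence gives $A_n\to A$, whence $a_n\to y-Ax$ and $g_n$ converges. So in each of these cases the action is proper.

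The substantive cases are (d) and (e), where the linear projection is noncompact, and the key idea is to find a single linear functional that is invariant under the noncompact flow yet still records the group parameter $t$ through the distinguished translation. In (d) a general element is $(B_t,\,t\lambda e_1+w)$ with $B_t=\exp(tY_{\fa})$ the boost fixing the first coordinate and $w\in\mathbb{W}^2$; since the first coordinate of $g_nx_n$ equals $(x_n)_1+\lambda t_n$, which converges while $(x_n)_1\to x_1$, the hypothesis $\lambda\neq 0$ forces $t_n$ to converge, after which $B_{t_n}$ and the translation parts converge as well. In (e) one computes $\exp\!\big(t(Y_{\fn}^{1}+\mu e_4)\big)=(P_t,\beta(t))$ with $P_t=\exp(tY_{\fn}^{1})$ unipotent (the generator satisfying $(Y_{\fn}^{1})^{3}=0$) and $\beta(t)=\mu\big(\tfrac{t^2}{2},0,-\tfrac{t^3}{6},t+\tfrac{t^3}{6}\big)^{t}$; here the null functional $p\mapsto p_3+p_4$ is $P_t$-invariant and vanishes on the $\ell$-summand of $\mathbb{W}^2$, so evaluating it on $g_nx_n$ yields $(x_n)_3+(x_n)_4+\mu t_n$, and $\mu\neq 0$ again pins down $t_n$. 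In both cases the convergence of $t_n$ produces a convergent subsequence of $g_n$, so the action is proper.

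The hard part is exactly this choice of functional in (d) and (e): one must isolate a coordinate ($p_1$ for the boost, the null coordinate $p_3+p_4$ for the parabolic) that is fixed by the noncompact linear flow, that is insensitive to the extra freedom carried by the translation subgroup $\mathbb{W}^2$, and on which the $\lambda$- (respectively $\mu$-) translation nevertheless acts nontrivially in $t$. This is where the nonvanishing of the parameter enters decisively; when $\lambda=0$ (respectively $\mu=0$) the functional no longer records $t$ and the action collapses to the nonproper cases (b) (respectively (c)) of Theorem~\ref{nonproper}. Finally, since a proper isometric action is closed in the isometry group---either by Alekseevsky's theorem quoted in the introduction, or directly because the limits produced above already lie in $H$---the last assertion of the theorem follows.
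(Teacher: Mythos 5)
Your proposal is correct and follows essentially the same route as the paper: reduce to verifying properness of (a)--(e) by combining Theorems \ref{main} and \ref{nonproper}, dispose of (a)--(c) by compactness of the linear projection, and for (d) and (e) extract convergence of the group parameter from exactly the coordinates the paper uses (the boost-invariant first coordinate $p_1$ in (d), and the null functional $p_3+p_4$ in (e)), after which the remaining translation parameters converge as well.
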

\begin{proof}
	Considering the Lie groups listed in Theorem \ref{nonproper} and Theorem \ref{main}, it is enough to prove that the action of the Lie groups mentioned in Theorem \ref{proper} are proper. Obviously, the action of any of the Lie groups of ($a$) to ($c$) is proper, since their linear projection are compact subgroups of $SO(3,1)$.
For the case (d), by a simple computation one gets that any element of the form $(C_{t},c_{t,s,w})\in SO_\circ(3,1)\ltimes \R^{3,1}$, where $t,s,w\in \R$ and
$$C_t=\left(\begin{array}{cccc} 1&0&0&0\\0&1&0&0\\0&0&\cosh(t)&\sinh(t)\\0&0&\sinh(t)&cosh(t)
	\end{array}\right),$$
and 
$$c_{t,s,w}=\left(\begin{array}{c} \lambda t\\s\\ w\\-w
	\end{array}\right),$$
belongs to $H$.
 Let $\{t_{n}\},\{s_{n}\}$ and $\{w_{n}\}$ be three real sequences, $\{g_n=(C_{t_n},c_{t_n,s_n,w_n})\}$ a sequence in $H$, and $\{X_{n}=(x_{n},y_{n},z_{n},v_{n})^{t}\}$ a sequence in $\R^{3,1}$. Let $g_{n}.X_{n}\rightarrow Y$ and $X_{n}\rightarrow X$, when $n\rightarrow +\infty$. If $Y=(y_{1},y_{2},y_{3},y_{4})^{t}$ and $X=(x_{1},x_{2},x_{3},x_{4})^{t}$, then $s_{n}\rightarrow y_{2}-x_{2}$ and $t_{n}\rightarrow \beta$, where $\beta=\frac{y_{1}-x_{1}}{\lambda}$ and so $w_{n}\rightarrow y_{3}-\cosh(\beta)x_{3}-\sinh(\beta)x_{4}$.

Now for case (e), by a simple computation one gets that any element of $H$ is of the form $(C_{t},c_{t,s,w})\in SO_\circ(3,1)\ltimes \R^{3,1}$ where  $t,s,w\in \R$ and
$$C_{t}=\left(\begin{array}{cccc} 1&0&t&t\\0&1&0&0\\-t&0&1-\frac{t^2}{2}&-\frac{t^2}{2}\\t&0&\frac{t^2}{2}&1+
\frac{t^{2}}{2}\end{array}\right)$$
and 
$$c_{t,s,v}=\left(\begin{array}{c} \frac{\lambda t^2}{2}\\ s\\ w\\ \lambda t-w
\end{array}\right).$$
Let $\{t_{n}\},\{s_{n}\}$ and $\{w_{n}\}$ be three real sequences, $\{g_n=(C_{t_n},c_{t_n,s_n,w_n})\}$ a sequence in $H$, and $\{X_{n}=(x_{n},y_{n},z_{n},v_{n})^{t}\}$ a sequence in $\R^{3,1}$. Let $g_{n}.X_{n}\rightarrow Y$ and $X_{n}\rightarrow X$, when $n\rightarrow +\infty$. If $Y=(y_{1},y_{2},y_{3},y_{4})^{t}$ and $X=(x_{1},x_{2},x_{3},x_{4})^{t}$, then $s_{n}\rightarrow y_{2}-x_{2}$, $t_{n}\rightarrow \frac{y_{3}+y_{4}-x_{3}-x_{4}}{\lambda}$ and so $w_{n}\rightarrow y_{3}+\frac{y_{3}+y_{4}-x_{3}-x_{4}}{\lambda}x_{1}-x_{3}+(\frac{y_{3}+y_{4}-x_{3}-x_{4}}{\lambda})^2(\frac{x_{3}+x_{4}}{2})$. Thus the action of $\exp(\R(Y^{1}_{\fn}+\mu e_{4}))\ltimes \mathbb{W}^{2}$, where $\mu\neq 0$, is proper.
\end{proof}

\section{orbits and orbit spaces of proper actions}
Let $G$ be a Lie group acting properly on a connected manifold $M$. The orbits $G(x)$ and $G(y)$ have the {\it same orbit type} if 
$G_x$ and $G_y$ are conjugate in $H$. This defines an equivalence
relation among the orbits of $H$ on $M$. Denote by $[G(x)]$ the
corresponding equivalence class, which is called the {\it orbit type} of
$G(x)$. A submanifold $S$ of $M$ is called a slice at $x$ if there is a $G$-invariant open neighborhood $U$ of $G(x)$ and a smooth equivariant retraction $r:U\rightarrow G(x)$, such that $S=r^{-1}(x)$. A fundamental feature of proper actions is the
existence of slice (see \cite{PT2}), which enables one to define a partial ordering
on the set of orbit types. The partial ordering on the set of orbit types is
defined by, $[G(y)] \leq [G(x)]$ if and only if $G_x$ is conjugate
in $G$ to some subgroup of $G_y$. If $S$ is a slice at $y$, it
implies that $[G(y)] \leq [G(x)]$ for all $x\in S$. Since $M/G$
is connected, there is a largest orbit type in the set of orbit types. Each
representative of this largest orbit type is called a principal
orbit. In other words, an orbit $G(x)$ is principal if and only if
for each point $y\in M$ the stabilizer $G_x$ is conjugate to
some subgroup of $G_y$ in $G$. Other orbits are called singular.

We start by discussing cohomogeneity one proper actions on $\R^{3,1}$. For $H\subset \Iso (\R^{3,1})$ we denote by $\mathcal{F}_{H}$ the collection of orbits of the action of $H$ on $\R^{3,1}$. By Theorem \ref{proper} we know the list of connected  subgroups of $\Iso (\R^{3,1})$ acting with \co. Hence we have four types of groups.

\textit{Type $(I)$: The Lie group $H$ is a pure translation group. Then $\mathcal{F}_{H}$ is invariant under a three-dimensional translation group}.

Let $H\in \{\R^{3}, \R^{2,1}, \mathbb{W}^{3}\}$, where $\R^{3}=\R e_{1}\oplus \R e_{2}\oplus \R e_{3}$, $\R^{2,1}=\R e_{2}\oplus \R e_{3}\oplus \R e_{4}$ and $ \mathbb{W}^{3}=\R e_{1}\oplus \R e_{2}\oplus \ell$. Then  $\mathcal{F}_{H}$ is a totally geodesic foliation of $\R^{3,1}$ whose leaves consist of the affine hyperplanes in $\R^{3,1}$ that are parallel to $\R^{3}, \R^{2,1}$ and $\mathbb{W}^{3}$ respectively:
$$\mathcal{F}_{\R^{3}}= \bigcup_{t\in \R}(te_{4}+\R^{3}),\quad \mathcal{F}_{\R^{2,1}}=\bigcup_{t\in \R} (te_{1}+\R^{2,1}),\quad \mathcal{F}_{\mathbb{W}^{3}}=\bigcup_{t\in \R}(t(e_{3}+e_{4})+\mathbb{W}^{3}).$$
Every orbit is principal and the orbit space is diffeomorphic to $\R$.

\textit{Type $(II)$: $H=SO(2)\times \R^{1,1}$. Then $\mathcal{F}_{H}$ is invariant under the two-dimensional translation group $ \R^{1,1}$}.

The action of $SO(2)$ leaves the foliation $\mathcal{F}_{\R^{2}}$ invariant and on each leaf $te_{3}+s e_4+\R^{3}\in \mathcal{F}_{\R^{2}}$ the orbits consist of the single point$\{te_{3}+se_{4}\}$, where $t,s\in \R$ and $te_3+se_{4}+S^{1}(r)$, where $r\in \R_{+}$.
Thus the set of the induced orbits of $H$ consists of the plane $\R^{1,1}$ and the pseudo-hyperbolic cylinders $ S^{1}(r)\times \R^{1,1}$, where $r\in \R_{+}$:
$$\mathcal{F}_{SO(2)\times \R^{1,1}} = \bigcup_{r\geq 0}(S^{1}(r)\times \R^{1,1}).$$
The orbit of the origin is the unique singular orbit congruent to $\R^{1,1}$. The orbit space is homeomorphic to $[0,\infty)$.

\textit{Type $(III)$: $H=SO(3)\times \R e_4$. Then $\mathcal{F}_{H}$ is invariant under the one-dimensional time-like translation group $\R e_4$}.

 The action of $SO(3)$ leaves the foliation $\mathcal{F}_{\R^{3}}$ invariant. On each leaf $ te_{4}+\R^{3}\in \mathcal{F}_{\R^{3}}$ the orbits consist of the single point ${te_{4}}$ and the spheres centered at that point. The orbits of $H$ therefore
consist of the time-like subspace $\R e_{4}$ (the singular orbit) and the cylinders $ S^{2}(r)\times \R e_{4}\subset \R^{3}\times \R e_{4}$,
where $S^{2}(r)$ is the sphere of radius $r\in \R_{+}$ in $\R^{3}:$
$$\mathcal{F}_{SO(3) \times \R e_{4}}=\R e_{4}\cup_{r\in \R_{+}}(S^{2}(r)\times \R e_{4}).$$
The unique singular orbit is $H(0)=\R e_4$. The orbit space is homeomorphic to $[0,+\infty)$.

\textit{ Type $(IV)$:  $H=\exp(\R(Y_{\fa}+\lambda e_{1}))\ltimes \mathbb{W}^{2}$, where $\lambda$ is a fixed nonzero real number.	
Then $\mathcal{F}_{H}$ is invariant under the two-dimensional degenerate translation group $\mathbb{W}^{2}$. Hence there is no spacelike orbit}.

Let $p=(p_1,p_2,p_3,p_4)^t$ be an arbitrary point of $\R^{3,1}$. The Lie group $\exp(\R(Y_{\fa}))$ leaves $\mathbb{W}^2$ invariant, so if $p_3=p_4$, then $H(p)=\mathbb{W}^3$. If $p_3\neq p_4$, then $H(p)$ is a Lorentzian generalized cylinder diffeomorphic to $\R^3$. Every orbit is a principal orbit diffeomorphic to $\R^3$. The orbit space is $\R$.

\textit{ Type $(V)$:  $H=\exp(\R(Y^{1}_{\fn}+\mu e_{4}))\ltimes \mathbb{W}^{2}$, where $\mu$ is a fixed nonzero real number.}

Let $p=(p_1,p_2,p_3,p_4)^t$ be an arbitrary point of $\R^{3,1}$. The set $\{Y_\fn^1+\mu e_4, e_2, e_3-e_4\}$ is a basis for the Lie algebra $\fg$. So the tangent space $T_p H(p)$ contains the lightlike direction $\ell$. Hence $H(p)$ is not spacelike. On the other hand
$$\frac{d}{dt}|_{t=0}(\exp (t(Y_\fn^1+\mu).p))=\mu^2t^2+(p_3+p_4)^2-2p_1\mu-\mu^2.$$
Hence, if $(p_3+p_4)^2-2p_1\mu-\mu^2<0$ (resp. $\geqslant 0$) then $H(p)$ is a Lorentzian (resp. a degenerate) hypersurface. Each orbit is a principal orbit diffeomorphic to $\R^3$ and the orbit space is $\R$.
\begin{rem}
Let $H$ be a closed and connected Lie subgroup of the isometry group of the Euclidean space $\mathbb{E}^n$ which acts on $\mathbb{E}^n$ with cohomogeneity one. Then, by Theorem 3.1 of \cite{MK}, its action is orbit equivalent to the action of $SO(k)\times \R^{n-k}$, where $1\leqslant k\leqslant n$, which is similar to that of one of the types (I) to (III). However, types (IV) and (V) clarifies the differences between cohomogeneity one actions on $\mathbb{E}^4$ and $\R^{3,1}$, when the action is proper.
\end{rem}
The following proposition is an immediate consequence of this section.
\begin{pro}\label{prop}
Let $H$ be a connected Lie subgroup of $\Iso(\R^{3,1})$, which acts properly, isometrically and with cohomogeneity one on $\R^{3,1}$. Then 

(a) the orbit space is homeomorphic to either $\R$ or $[0,+\infty)$.

(b) every singular orbit (if there is any), is either a one dimensional timelike affine subspace or a two dimensional Lorentzian affine subspace of $\R^{3,1}$. In particular, there is neither spacelike nor degenerate singular orbit. 

(c)  every orbit is geodesically complete.

(d) there is a spacelike orbit if and only if the action is orbit equivalent to the action of the pure translation group $\R^3$. 
\end{pro}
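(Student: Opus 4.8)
The plan is to deduce all four assertions directly from the orbit descriptions of Types $(I)$--$(V)$ established in this section, which by Theorem \ref{proper} exhaust, up to orbit equivalence, the proper cohomogeneity one actions on $\R^{3,1}$. Thus I would fix such an $H$, replace it by the representative of its type, and read off (a)--(d) from the explicit foliation $\mathcal{F}_H$. Three of the four assertions follow by inspection, and only (c) carries genuine content.

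For (a), the computations above give $\R^{3,1}/H\cong\R$ for Types $(I)$, $(IV)$, $(V)$ and $\R^{3,1}/H\cong[0,+\infty)$ for Types $(II)$, $(III)$, which is exactly the stated dichotomy. For (b), a singular orbit occurs only in Types $(II)$ and $(III)$: the unique singular orbit of $SO(2)\times\R^{1,1}$ is $H(0)=\R^{1,1}$, a two dimensional Lorentzian affine subspace, and that of $SO(3)\times\R e_4$ is $H(0)=\R e_4$, a one dimensional timelike affine subspace, while in Types $(I)$, $(IV)$, $(V)$ every orbit is principal. Hence no singular orbit is spacelike or degenerate. For (d), among the Type $(I)$ translation groups only $\R^3$ produces spacelike leaves (the groups $\R^{2,1}$ and $\mathbb{W}^3$ give Lorentzian, respectively degenerate, leaves), whereas in each of Types $(II)$--$(V)$ every orbit carries a non-spacelike tangent vector — a vector of $\R^{1,1}$, the timelike vector $e_4$, or a null vector of $\mathbb{W}^2$ (which contains $\ell$), respectively — so none of those orbits is spacelike. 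Therefore a spacelike orbit exists precisely when $H$ is orbit equivalent to the translation group $\R^3$.

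It remains to establish (c). The orbits of Types $(I)$--$(III)$ are manifestly complete: they are either affine subspaces of $\R^{3,1}$, which are totally geodesic and hence complete, or metric products $S^1(r)\times\R^{1,1}$ and $S^2(r)\times\R e_4$ of a round sphere with an affine subspace, and a product of complete factors is complete. For Type $(V)$ the acting group is abelian, so once the stabilizer is seen to be trivial the orbit is $H$ itself equipped with a bi-invariant metric, whose geodesics are the one-parameter subgroups and are therefore complete. The remaining case, Type $(IV)$, is the delicate one, since there $H\cong\R\times\mathrm{Aff}_\circ(\R)$ is non-abelian; here I would write the induced metric explicitly in the $(t,s,w)$-coordinates coming from the parametrization in the proof of Theorem \ref{proper} and study its geodesic equations directly.

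The step I expect to be the main obstacle is precisely this completeness claim for Type $(IV)$: because the intrinsic geometry is that of a non-abelian solvable group, completeness is not formal, and when the orbit is degenerate the notion of geodesic must be read through the canonical homogeneous (affine) connection rather than a Levi--Civita connection. I would therefore formulate (c) as completeness of the homogeneous affine structure of each orbit, which follows from homogeneity under $H$ together with the fact that the flows generating the orbit are restrictions of one-parameter subgroups of $\Iso(\R^{3,1})$ and hence defined for all parameter values; the careful justification that this homogeneous completeness holds uniformly across Types $(IV)$ and $(V)$, including the degenerate orbits, is where the real work of the proof lies.
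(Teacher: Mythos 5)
Your handling of (a), (b) and (d) coincides with the paper's: the proposition is presented there as an immediate consequence of the type-by-type orbit descriptions, and reading those three claims off Types $(I)$--$(V)$ is exactly what is intended. The substantive issue is part (c), and while you correctly identify Type $(IV)$ as the delicate case, the repair you propose does not work and the computation you defer actually refutes the claim rather than completing it.

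The argument ``the flows generating the orbit are restrictions of one-parameter subgroups of $\Iso(\R^{3,1})$, hence complete, hence the homogeneous structure is complete'' is invalid: completeness of Killing flows never implies geodesic completeness of an indefinite homogeneous metric, and left-invariant Lorentzian metrics on noncompact solvable groups --- which is precisely what the principal Type $(IV)$ orbits carry --- can be incomplete. Here they are. A principal orbit of $\exp(\R(Y_{\fa}+\lambda e_{1}))\ltimes \mathbb{W}^{2}$ is the hypersurface $\{y_3+y_4=F(y_1)\}$ with $F(y_1)=c\,e^{(y_1-p_1)/\lambda}$, $c\neq 0$; in the coordinates $(y_1,y_2,v)$, $v=y_3-y_4$, the induced metric is $g=dy_1^2+dy_2^2+F'(y_1)\,dy_1\,dv$, which is Lorentzian. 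The field $\partial_v$ is Killing, so along any geodesic $\gamma(\tau)$ the quantity $g(\partial_v,\dot\gamma)=\tfrac12\tfrac{d}{d\tau}F(y_1(\tau))$ is constant, i.e.\ $F(y_1(\tau))$ is an affine function of the affine parameter. Since $F$ takes values only in $(0,\infty)$ (say $c>0$) on the orbit, every geodesic with $g(\partial_v,\dot\gamma)\neq 0$ is inextendible beyond a finite parameter value, at which $y_1\to-\infty$. So these orbits are geodesically incomplete for the induced Levi--Civita connection, and (c) as literally stated cannot be proved by your route (nor by the paper's appeal to inspection); it should be flagged rather than argued around. Your remaining observations for (c) are essentially fine: Types $(I)$--$(III)$ give affine subspaces or products of round spheres with affine subspaces; for Type $(V)$ the group is abelian and acts simply transitively, so the pulled-back metric has constant coefficients and the orbit is flat and complete; and you are right that on a degenerate orbit one must first say what ``geodesic'' means, although the degenerate orbits that actually occur as affine subspaces ($\mathbb{W}^3$ and the Type $(I)$ leaves) are totally geodesic and unproblematic.
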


As an immediate consequence of Proposition \ref{prop}, one gets that if there exists a space-like orbit, then every orbit is a space-like hyperplane congruent to $\R^3$. This result is true in the general case, for proper actions on $\R^{n,1}$, and it has been proved in \cite{A2}. In the three dimensional case, proper actions on $\R^{2,1}$, it is proved that if there is two degenerate orbits then the orbits are parallel degenerate hyperplanes (see \cite{Ahmadi}). However, the proper action of $\exp(\R(Y^{1}_{\fn}+\mu e_{4}))\ltimes \mathbb{W}^{2}$ on $\R^{3,1}$, type (V) above, shows that the similar result is not hold for proper actions on $\R^{3,1}$.

\bibliographystyle{amsplain}

\end{document}